\newtheorem{corollary}{Corollary}
\newtheorem{definition}{Definition}
\newtheorem{lemma}{Lemma}
\newtheorem{proposition}{Proposition}
\newtheorem{theorem}{Theorem}
\numberwithin{equation}{section}
\title[ Charged Hawking Mass and Rigidity of minimal surfaces]{ A local rigidity theorem for minimal
two-spheres in an electrovacuum spacetime}
\author[H. Baltazar, A. Barros and R. Batista]{H. Baltazar, A. Barros and R. Batista}
\address[H. Baltazar] {Departamento de Matem\'{a}tica, Universidade Federal do Piau\'{\i}, 64049-550 Teresina, Piaui, Brazil.}
\email{halyson@ufpi.edu.br}
\address[A. Barros] { Departamento de Matem\'{a}tica,Universidade Federal do Cear\'{a} - UFC, Campus do Pici, Av. Humberto Monte, Bloco 914, Fortaleza, CE 60455-760, Brazil}
\email{abbarros@mat.ufc.br}
\address[R. Marcolino] {Departamento de Matem\'{a}tica, Universidade Federal do Piau\'{\i}, 64049-550 Teresina, Piaui, Brazil.}
\email{rmarcolino@ufpi.edu.br}
\subjclass[2000]{Primary 53A10; Secondary 53C24.}
\keywords{charged Hawking mass; scalar curvature; electrovacuum spacetime.}
\begin{document}

\newcommand{\spacing}[1]{\renewcommand{\baselinestretch}{#1}\large\normalsize}
\spacing{1.2}

\begin{abstract}
The purpose of this article is to prove that, under suitable constrains on the electrovacuum spacetime $M$, if $\Sigma\subset M$ is an embedded strictly stable minimal two-sphere which locally maximizes the charged Hawking mass, then there exist a neighborhood of it in $M$ isometric to the Reissner-Nordstr\"om-de Sitter space.  At the same time, motived by \cite{Brendle}, we will deduce an estimate for area of a two-sphere which is locally area minimizing in an electrovacuum spacetime. Moreover, if the equality holds, then there exist a neighborhood of it in $M$ isometric to the charged Nariai space.
\end{abstract}

\maketitle

\section{Introduction}\label{sectionInt}
A fascinating problem in differential geometry is to study Riemannian manifolds with scalar curvature lower bound.
In large part, this is because such manifolds have connections with general relativity. In the last decades, much efforts have been devoted to describe the topology and geometry of three-dimensional Riemannian manifolds $(M^3, g)$ for which the scalar curvature $R_g$ has a lower bound and under the assumption of the existence of a compact embedded surface in $M$ with some geometric property. Usually, stability as well as area minimizing are assumed.  Among the first celebrated works in this matter we mention those due to Schoen and Yau \cite{SY} and Meeks, Simon and Yau \cite{meeks}.

In the direction to understand this question Bray, Brendle and Neves \cite{Brendle} studied the structure of a three-manifold M under
the assumption of the existence of an embedded area minimizing two-sphere,
motived by a classical work due to Toponogov concerning closed geodesics
in surfaces of positive Gaussian curvature. It is important to observe  that
a rigidity statement as in \cite{Brendle} fails if we replace the condition of area minimizing by stability, due the counterexamples to Min-Oo’s conjecture \cite{coda}. A good survey due to Brendle \cite{brendle} gives an overview of this matter.

Recently, Maximo and Nunes \cite{mn} have obtained a similar rigidity
result by replacing the area minimizing assumption by strictly stability and
maximization of the Hawking Mass of a surface $\Sigma\subset(M^3, g)$.  More precisely, they have established a local rigidity result
assuming the existence of a locally maximizing two-sphere $\Sigma$ that is strictly stable in a three-dimensional manifold M with scalar curvature $R_g\geq 2$. Moreover, there exists a neighborhood of $M$ isometric to one of the de Sitter-Schwarzschild metric in $(-\epsilon,\epsilon)\times\Sigma)$.

On the other hand, one of the motivations to obtained rigidity results is the Positive Mass Theorem which states that an asymptotically flat three-manifold with nonnegative scalar curvature has nonnegative ADM mass (concept developed by Arnowitt, Deser and Misner in \cite{desser}) that was proved by Schoen and Yau in \cite{SY} and later by Witten \cite{witten} using spinors and the Dirac equation. Moreover, the ADM mass is zero if and only if $M$ is isometric to $\mathbb{R}^3$ with the standard flat metric.

In this paper, we are interested in analogue theorems proved by Maximo and Nunes in \cite{mn} for the Reissner-Nordstr${\rm \ddot{o}}$m-de Sitter  space in the charged setting as well as the results obtained by Bray, Brendle and Neves in \cite{Brendle} now  for charged Nariai space. These spaces are examples of the time-symmetric initial data set for the Einstein-Maxwell equation. In order to start describing such geometric models, remember the definition of initial data set for the Einstein-Maxwell equations.

Let $(\mathcal{M}^4, \gamma)$  a Lorentzian $4$-manifold and a $2$-form $F$ on $\mathcal{M}$ . The Einstein-Maxwell equations with cosmological constant $\Lambda\in\mathbb{R}$ consists of a triple $(\mathcal{M}^4, \gamma, F)$ satisfying the following system
{\setlength\arraycolsep{2pt}
\begin{eqnarray}\label{spacetime1}
Ric_{\gamma}-\frac{R_{\gamma}}{2}\gamma &+& \Lambda\gamma = 8\pi T_F, \\\label{spacetime2}
dF = 0;&&div_{\gamma} F = 0.
\end{eqnarray}}
Here $T_F$ denotes the \textit{electromagnetic energy-momentum tensor}
\begin{equation}\label{tensorelectromagnetic}
T_F =\frac{1}{4\pi}\Big(F\circ F-\frac{1}{4}|F|^2_{\gamma}\Big),
\end{equation}
where $F\circ F = \gamma^{\nu\mu}F_{\alpha\nu}F_{\beta\mu}$. The solutions of the \eqref{spacetime1}-\eqref{spacetime2} are called \textit{electrovacuum spacetimes}. The electromagnetic tensor $F$ admits a unique decomposition in terms
of the electric field $E$ and the magnetic field $B$, as measured by the static observer, for more details
see \cite{Gourgoulhon}.

A initial data set  $(M, g, K, E)$ for the Einstein–Maxwell equations with vanishing magnetic field is a spacelike hypersurface of $(\mathcal{M}^4,\gamma)$ with induced Riemannian metric $g$, a vector field $E$  and second fundamental form $K$ satisfying the \textit{Einstein-Maxwell constraint equations}
{\setlength\arraycolsep{2pt}
\begin{eqnarray}\label{initial data}
16\pi\mu &=& R_g + (Tr_gK)^2 -2\Lambda-|K|^2-2|E|^2\\
8\pi J & =& div(K - (Tr_gK)g),
\end{eqnarray}}where $\mu$ and $J$ are the energy and momentum densities of the matter fields.
In the time-symmetric case ($K=0$) our initial data sets represent a metric which is not changing with time.

The condition that the energy is positive for all observers gives rise to the charged dominant energy condition $\mu\geq |J|_g$. More precisely, for time-symmetric case is given by
\begin{definition}\label{dec} The time-symmetric initial data set $(M, g, E)$ is said to satisfy 
\begin{itemize}
\item[i)]The charged dominant energy condition if
$$R\geq 2\Lambda + 2|E|^2,$$
where $\Lambda\in\mathbb{R}$ plays the role of the cosmological constant.
\item[(i)]The Einstein-Maxwell constraint without charged matter if $div E =0$ everywhere on $M$.
\end{itemize}
\end{definition}

\begin{definition}\label{carga}
If $\Sigma\subset M$ is a closed orientable embedded surface, we define the charge $Q(\Sigma)$ relative to $E$ as
\begin{equation}\label{carga}
Q(\Sigma) = \frac{1}{4\pi}\int_{\Sigma}
\langle E,\nu\rangle d\sigma,
\end{equation}
where $\nu$ is a unit normal vector field on $\Sigma$. If $\Sigma$ bounds a volume, then $Q(\Sigma)$ is the total charge contained within $\Sigma$.
\end{definition}

Before to proceed let us provide some information for a special solution of the Einstein-Maxwell equations, such a structure is given by
\begin{equation}\label{m}
\gamma=-\Big(1-\frac{\Lambda r^2}{3}+\frac{Q^2}{r^2}-\frac{2m}{r}\Big)dt^2+\Big(1-\frac{\Lambda r^2}{3}+\frac{Q^2}{r^2}-\frac{2m}{r}\Big)^{-1}dr^2+r^2g_{\mathbb{S}^2},
\end{equation}
where $(\mathbb{S}^2, g_{\mathbb{S}^2})$ denotes the round sphere, $r$ is a radial parameter varying in a suitable open set $I\subset (0,\infty)$, and $t\in\mathbb{R}$. Is the so called\textit{ Reissner-Nordstr$\ddot{o}$m-de Sitter  spacetime} with mass parameter $m > 0$, electric charge $Q\in\mathbb{R}$ and positive cosmological constant $\Lambda\in\mathbb{R}$. The set $I$
depends on the solution of the following equation:
\begin{equation}\label{defrn}
\frac{\Lambda}{3}r^4-r^2 + 2mr- Q^2 = 0.
\end{equation}
\subsubsection*{$\bullet$ Reissner-Nordstr$\ddot{o}$m-de Sitter space}
The equation \eqref{defrn} has exactly three positive distinct and one negative real roots if and only if $0<Q^2<\frac{1}{4\Lambda}$ and
\begin{equation*}
\frac{2+\sqrt{1-4\Lambda Q^2}}{3\sqrt{2\Lambda}}\sqrt{1-\sqrt{1-4\Lambda Q^2}}<m<\frac{2-\sqrt{1-4\Lambda Q^2}}{3\sqrt{2\Lambda}}\sqrt{1+\sqrt{1-4\Lambda Q^2}}.
\end{equation*}
Its proof can be found in \cite[Proposition 1]{Mokdad}. If we denote by $r_{-}<r_+<r_c $ the positive roots of \eqref{defrn}.
The physical significance of these numbers is that $\{r = r_-\}$ is the inner
(Cauchy) black hole horizon, $\{r = r_+\}$ is the is the outer (Killing) black hole
horizon and $\{r = r_c\}$ is the cosmological horizon, and the smallest root has
no physical significance, since is negative.

For obtain a Riemannian metric we take $r\in (r_+, r_c)$, because the function $\Big(1-\frac{\Lambda }{3}r^2+\frac{Q^2}{r^2}-\frac{2m}{r}\Big)$ is positive in this region and negative in that one where $r \in (r_-, r_+)\cup (r_c,+\infty )$. Thus, the slice $\{t = 0\}$ of the Reissner-Nordstr${\rm \ddot{o}}$m-de Sitter
 spacetime defined on $(r_{+}, r_c)\times\mathbb{S}^2$ endowed with the metric 
\begin{equation}\label{Rnmetr}
g_{m,Q,\Lambda}=\Big(1-\frac{\Lambda}{3}r^2+\frac{Q^2}{r^2}-\frac{2m}{r}\Big)^{-1}dr^2+r^2g_{\mathbb{S}^2}
\end{equation}
is called \textit{Reissner-Nordstr\"om-de Sitter  space}.

To simplify the notation let us write  
\begin{equation}\label{simpl}
\rho(r)^2=(1-\frac{\Lambda}{3}r^2+\frac{Q^2}{r^2}-\frac{2m}{r}).
\end{equation}The electric field is given by $E = \frac{Q}{r^2}\rho(r)\partial r$ and scalar curvature is equal to $2|E|^2+2\Lambda$ everywhere. In a spherical slice, $\Sigma_r =\{r\}\times\mathbb{S}^2$, where $r\in(r_+, r_c)$, a unit normal is $N_r = \rho(r)\partial r$. So it is easy to
see that for all $r\in(r_+, r_c)$, the charge of the slice $\Sigma_r$ with respect to $N_r$ (as
defined in \eqref{carga}) is equal to $Q$.

By change of variable, the Reissner-Nordstr${\rm \ddot{o}}$m-de Sitter  metric can be rewritten as
\begin{equation}\label{metricc}
g_{m,Q,\Lambda} = ds^2 + u(s)^2g_{\mathbb{S}^2},\ \ \  {\rm on}\ \ \  [0, a]\times\mathbb{S}^2
\end{equation}
where $u: (0, a)\rightarrow (r_+, r_c)$ is a function that extends continuously to $[0, a]$ with $u(0) = r_+$, $u(a) = r_c$ and $\frac{ds}{dr}=\rho(r)^{-1}>0$ for $r\in(r_+,r_c)$.

After reflection of the metric $g_{m,Q,\Lambda}$, we can define a
complete periodic rotationally symmetric metric  on $\mathbb{R}\times\mathbb{S}^2$ with cosmological constant equal to $\Lambda$,
scalar curvature satisfying $R-2|E|^2=2\Lambda$ and charge $Q(\Sigma_r)=Q$ for each $r\in\mathbb{R}$. Moreover, the function $u$ solves the following second-order nonlinear
differential equation
\begin{equation}\label{edo}
u''(s)=\frac{1}{2}\Big(\frac{1-u'(s)^2}{u(s)}\Big)-\frac{1}{2}\Big(\frac{\Lambda u(s)^4+Q^2}{u(s)^3}\Big)
\end{equation}

\subsubsection*{$\bullet$ The charged Nariai space}

Now we assume that  there is a positive double root $0<r_{-}<r_{+}=r_{c}=\alpha$ and one root  negative of \eqref{defrn}, then
\begin{equation}
m = \alpha\Big(1-\frac{2}{3}\Lambda\alpha^2\Big)\ \ \ {
\rm and}\ \ \  Q^2 = \alpha^2 (1-\Lambda\alpha^2).
\end{equation}
Since the cosmological constant $\Lambda$ is positive, we have that $\alpha\in\Big(0,\frac{1}{\sqrt{\Lambda}}\Big)$ and 
$$r_{-} =\Big(\frac{3}{\Lambda}-2\alpha^2\Big)^{1/2}-\alpha.$$

If $\frac{1}{2\Lambda}<\alpha^2<\frac{1}{\Lambda}$ we obtain the \textit{charged Nariai space}. The function defined by \eqref{simpl} degenerates, since $r_+ =
r_c$, so, after a suitable coordinate transformation we can rewrite the metric \eqref{m}, such that,  each slice $\{t =
constant\}$ is a cylinder with a standard product metric, which extend smoothly
for all $s\in\mathbb{R}$. So, $\mathbb{R}\times\mathbb{S}$ endowed with the metric $g= ds^2 + \rho^2g_{\mathbb{S}^2}$ and $E(s) = \frac{Q}{\rho^2}\partial_s$ define the \textit{charged Nariai space}.

These spaces are examples of electrostatic systems, see Appendix for precise definition of this.
Standard reference for these spaces covered here can be found in \cite{brad}.

Now, following the procedure adopted in \cite{khuri}, we going to provide a new quasi-local mass adapted to initial data for the Einstein–Maxwell system which plays a crucial role in this paper.

Using \eqref{simpl}, we have that 
$$m(r) =\frac{1}{2}r\Big(1 -\frac{\Lambda}{3} r^2+ \frac{Q^2}{r^2}-\rho(r)^2\Big),$$
for every spherical slice, $\Sigma_r =\{r\}\times\mathbb{S}^2$ in the Reissner-Nordstr${\rm \ddot{o}}$m-de Sitter  space. By the spherical symmetry, every
spherical slice $\Sigma_r$ is umbilical and has constant mean curvature $H_r =\frac{2\rho(r)}{r}$ and $|\Sigma_r|=4\pi r^2$. 
Whence, we deduce that 
$$m(r) =\sqrt{\frac{|\Sigma_r|}{16\pi}}\Big(1 + \frac{4\pi Q^2}{|\Sigma_r|}-\frac{1}{16\pi}\int_{\Sigma_r}\Big(H^2+\frac{4}{3}\Lambda\Big)d\sigma_r\Big),$$
where we used that  $R-2|E|^2=2\Lambda$ and $E=\frac{Q}{r^2}\rho(r)\partial_r$.

\begin{definition} Given $(M,g)$ be a Riemannian three-manifolds  and a closed
$2$-surface $\Sigma\subset{M}$, the charged Hawking mass is defined to be
\begin{equation}\label{cmh}
m_{{\rm CH}}(\Sigma)=\Big(\frac{|\Sigma|}{16\pi}\Big)^{\frac{1}{2}}\Big(1-\frac{1}{16\pi}\int_{\Sigma}\Big(H^2+\frac{2}{3}\zeta\Big)d\sigma+\frac{4\pi Q(\Sigma)^2}{|\Sigma|}\Big),
\end{equation}
where $E\in\mathfrak{X}(M)$, $Q(\Sigma)=\frac{1}{4\pi}\int_{\Sigma}\langle E,\nu\rangle d\sigma$ is the total electric charge contained within $\Sigma,$  $H$ is the mean curvature of $\Sigma$ and $\zeta=\inf(R-2|E|^2)$.
\end{definition}
Note that $m_{CH}$ reduces to the ordinary Hawking mass when $Q(\Sigma) = 0$.

We remark that the slices in the Reissner-Nordstr${\rm \ddot{o}}$m-de Sitter  have constant charged Hawking mass. Indeed, it easy to check that for any slice $\Sigma_r$ of the $(\mathbb{R}\times\mathbb{S}^2, g_{m,Q,\Lambda})$ we have 
$$\frac{d}{dr}m_{{\rm CH}}(\Sigma_r)=\frac{1-u'(r)^2}{2}-\frac{1}{2}\Big(\frac{\Lambda u(r)^4+Q^2}{u(r)^2}\Big)-u(r)u''(r).$$
Since $u$ solves \eqref{edo} the  Reissner-Nordstr${\rm \ddot{o}}$m-de Sitter  space has constant scalar curvature
equal to $2|E|^2+2\Lambda$, which gives that each slice has constant charged Hawking mass.

Throughout the paper we will choose
the following normalization $\Lambda=1$. So that in particular the Reissner-Nordstr${\rm \ddot{o}}$m-de Sitter  metric has scalar curvature equal to $2|E|^2+2$.

Roughly speaking, our first result shows that slices in the Reissner-Nordstr${\rm \ddot{o}}$m-de Sitter space
are local maxima of the charged Hawking mass with respect to the embedded surfaces whose normal graphs have small enough $C^2$-norm. In this sense, inspired by ideas outlined in \cite{mn}, we will follow the same steps left in their article, now considering the Reissner-Nordstr${\rm \ddot{o}}$m-de Sitter space, in order to deduce our theorem. More precisely, we have the following result.

\begin{theorem}\label{gra}
Let $\Sigma_r=\{r\}\times\mathbb{S}^2$ be a slice of the Reissner-Nordstr$\ddot{o}$m-de Sitter manifold $(\mathbb{R}\times\mathbb{S}^2, g_{Q,a})$. Then there exists an $\epsilon = \epsilon(r) > 0$ such that if $\Sigma\subset \mathbb{R}\times\mathbb{S}^2$ is an embedded two-sphere, which is a normal graph over $\Sigma_r$ given by $\phi\in C^2(\Sigma_r)$ with $\|\phi\|_{C^2(\Sigma_r)} < \epsilon$, one has
\begin{itemize}
\item[i)] either $m_{{\rm CH}}(\Sigma) <  m_{{\rm CH}}(\Sigma_r)$;
\item[ii)] or  $\Sigma$ is a slice $\Sigma_s$ for some $s$.
\end{itemize}
\end{theorem}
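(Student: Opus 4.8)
The plan is to parametrize all competitors explicitly, reduce $m_{\rm CH}$ to a functional of the graph function, and then run a second-order Taylor expansion about the slice. Use the arclength coordinate of \eqref{metricc}, so that $g_{m,Q,\Lambda}=ds^{2}+u(s)^{2}g_{\mathbb{S}^2}$ with $g_{\mathbb{S}^2}$ the round metric of Gauss curvature $1$; after translating in $s$ we may assume $\Sigma_{r}=\{0\}\times\mathbb{S}^2$. A competitor is $\Sigma_{\phi}=\{(\phi(x),x):x\in\mathbb{S}^2\}$ with $\|\phi\|_{C^{2}(\mathbb{S}^2)}$ small; it cobounds with $\Sigma_{r}$ the region $\{0\le s\le\phi(x)\}$, and since $\operatorname{div}E=0$ there, the divergence theorem gives $Q(\Sigma_{\phi})=Q$. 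Because $R-2|E|^{2}\equiv2\Lambda=2$ on the Reissner--Nordstr\"om--de Sitter space we have $\zeta=2$, so \eqref{cmh} collapses to
\[
m_{\rm CH}(\Sigma_{\phi})=\Big(\tfrac{|\Sigma_{\phi}|}{16\pi}\Big)^{1/2}\Big(1+\tfrac{4\pi Q^{2}}{|\Sigma_{\phi}|}-\tfrac{|\Sigma_{\phi}|}{12\pi}-\tfrac{1}{16\pi}\int_{\Sigma_{\phi}}H^{2}\,d\sigma\Big)=:\mathcal{F}(\phi),
\]
a functional of $\phi$, $u$ and $Q$ alone.

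From the induced metric $g_{\Sigma_{\phi}}=u(\phi)^{2}g_{\mathbb{S}^2}+d\phi\otimes d\phi$ one gets $|\Sigma_{\phi}|=\int_{\mathbb{S}^2}\big(u(\phi)^{2}+\tfrac12|\nabla\phi|^{2}\big)\,d\mu+O(\|\phi\|_{C^{2}}^{4})$, while writing $\Sigma_{\phi}$ as the zero set of $s-\phi(x)$ and applying the ambient divergence formula for the mean curvature gives $H(\Sigma_{\phi})=\tfrac{2u'(\phi)}{u(\phi)}-\tfrac{\Delta\phi}{u(\phi)^{2}}+\mathcal{Q}(\phi)$, where $\nabla,\Delta$ are those of $g_{\mathbb{S}^2}$ and $\mathcal{Q}$ collects terms of order at least two in $(\phi,\nabla\phi,\nabla^{2}\phi)$. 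Substituting into $\mathcal{F}$, Taylor-expanding $u,u',u''$ about $s=0$ and integrating by parts on $\mathbb{S}^2$ yields
\[
\mathcal{F}(\phi)=\mathcal{F}(0)+D\mathcal{F}(0)[\phi]+\tfrac12 D^{2}\mathcal{F}(0)[\phi,\phi]+O(\|\phi\|_{C^{2}}^{3}).
\]

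Next I claim the linear term vanishes identically. Since $\Sigma_{r}$ is an umbilic slice with constant mean curvature and $R-2|E|^{2}-2\Lambda\equiv0$, after integration by parts $D\mathcal{F}(0)[\phi]$ becomes a constant multiple of $\int_{\mathbb{S}^2}\phi\,\Phi\big(u(0),u'(0),u''(0)\big)\,d\mu$, and $\Phi$ vanishes precisely because $u$ solves the ODE \eqref{edo}; this is the infinitesimal counterpart of the fact recalled in the Introduction that $r\mapsto m_{\rm CH}(\Sigma_{r})$ is constant. Hence $D\mathcal{F}(0)\equiv0$, and the sign of $\mathcal{F}(\phi)-\mathcal{F}(0)$ is controlled by $D^{2}\mathcal{F}(0)$.

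Finally, decompose $\phi=\bar\phi+\psi$ with $\bar\phi=\tfrac{1}{4\pi}\int_{\mathbb{S}^2}\phi\,d\mu$ and $\int_{\mathbb{S}^2}\psi\,d\mu=0$; viewing $\Sigma_{\phi}$ as a normal graph over the slice $\Sigma_{\bar\phi}$ and using once more that all slices share the same charged Hawking mass, we may assume $\bar\phi=0$, so that only $D^{2}\mathcal{F}(0)$ restricted to mean-zero functions is relevant. The explicit evaluation of $D^{2}\mathcal{F}(0)$ — whose heart is the Hessian of $\phi\mapsto\int_{\Sigma_{\phi}}H^{2}\,d\sigma$ at a slice that is CMC but not minimal — assembles, after using \eqref{edo}, into a quadratic form $D^{2}\mathcal{F}(0)[\psi,\psi]=-\int_{\mathbb{S}^2}\big(a\,|\nabla\psi|^{2}+b\,\psi^{2}\big)\,d\mu$ with $a=a(r)>0$ and $2a(r)+b(r)>0$. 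Since $\int_{\mathbb{S}^2}|\nabla\psi|^{2}\ge2\int_{\mathbb{S}^2}\psi^{2}$ for mean-zero $\psi$ on the unit sphere, $D^{2}\mathcal{F}(0)[\psi,\psi]\le-(2a+b)\int_{\mathbb{S}^2}\psi^{2}\le0$ with equality only when $\psi\equiv0$; choosing $\epsilon=\epsilon(r)$ small enough to absorb the cubic remainder then gives $m_{\rm CH}(\Sigma)=\mathcal{F}(\phi)<\mathcal{F}(0)=m_{\rm CH}(\Sigma_{r})$ when $\psi\not\equiv0$ (case (i)) and $\Sigma=\Sigma_{\bar\phi}$ a slice $\Sigma_{s}$ when $\psi\equiv0$ (case (ii)). The main obstacle is exactly this last computation: expanding $\int_{\Sigma_{\phi}}H^{2}$ to second order around a non-minimal slice produces many terms, and the sign inequality $2a(r)+b(r)>0$ is where the specific warping of Reissner--Nordstr\"om--de Sitter (through \eqref{edo}) and the spectral gap of the round sphere must both be invoked — and it is the reason $\epsilon$ is allowed to depend on $r$.
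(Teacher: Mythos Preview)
Your broad strategy—Taylor-expand $m_{\rm CH}$ to second order in the graph function, kill the linear term via the ODE \eqref{edo}, reduce to mean-zero functions using the constancy of $m_{\rm CH}$ along slices, and then control the Hessian through the spectral gap of $\mathbb{S}^{2}$—does match the paper's outline. But there is a structural error in your claimed form of $D^{2}\mathcal{F}(0)$. The charged Hawking mass contains $\int_{\Sigma}H^{2}\,d\sigma$, and the first variation of $H$ is $L\phi=\Delta_{\Sigma}\phi+(\mathrm{Ric}(\nu,\nu)+|A|^{2})\phi$; consequently the second variation of $\int H^{2}$ carries the term $2\int(L\phi)^{2}$, which contains $\int(\Delta\psi)^{2}$ and is therefore \emph{fourth-order} in $\psi$. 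No integration by parts collapses this to the second-order form $-\int(a|\nabla\psi|^{2}+b\psi^{2})$ you assert, and the presence of $\int(\Delta\phi)^{2}$ is in fact what drives the argument.

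The paper works this out in detail for minimal slices (where Proposition~\ref{2vari} applies): after inserting the explicit value of $\mathrm{Ric}(\nu_{r},\nu_{r})$ from \eqref{RicH}, the second variation reduces to a positive multiple of $\mathrm{Ric}(\nu_{r},\nu_{r})\int|\nabla\phi|^{2}-\int(\Delta\phi)^{2}$. Two ingredients you never invoke are then essential. First, an \emph{area--charge inequality} $|\Sigma_{r}|+48\pi^{2}Q^{2}/|\Sigma_{r}|\le 12\pi$ (Corollary~\ref{inq1B}, proved in the Appendix from the electrostatic structure via a Robinson--Shen type identity) controls the sign of the $\int|\nabla\phi|^{2}$ coefficient. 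Second, the Bochner--Weitzenb\"ock formula on the round $\Sigma_{r}$ gives $\int(\Delta\phi)^{2}\ge\tfrac{8\pi}{|\Sigma_{r}|}\int|\nabla\phi|^{2}$, which together with $|\Sigma_{r}|<8\pi$ (from \eqref{edo}) and Poincar\'e yields the desired strict negativity. Your unverified assertion ``$a(r)>0$ and $2a(r)+b(r)>0$'' is standing in for all of this, and for a quadratic form of the wrong order. For non-minimal slices the paper is itself terse and defers to \cite{mn}, but the fourth-order nature of the Hessian and the need for Bochner--Weitzenb\"ock persist there as well.
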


In \cite{brad} Cruz et al. showed that every slice $\Sigma_r$ in the Reissner-Nordstr${\rm \ddot{o}}$m-de Sitter space satisfying
$$\frac{1-\sqrt{1-4Q^2}}{2}<r^2<\frac{1+\sqrt{1-4Q^2}}{2},$$
must be strictly stable (see next section
for a definition). In particular, $\Sigma_0$ is strictly stable. 

Recently, M\'aximo e Nunes in \cite{mn} were able to deduce a local rigidity for the de Sitter-Schwarzschild manifold, in this case the authors studied a strictly stable minimal two-sphere $\Sigma$ that locally minimizes the Hawking mass on a Riemannian three manifold $M$ and they proved that a neighborhood of it in $M$ must be isometric to the de Sitter-Schwarzshild metric $(-\epsilon,\epsilon)\times \Sigma$. Therefore, it is natural to ask what happens if we consider the same problem for a three manifold endowed of an electric field $E$ satisfying the initial data set for Einstein-Maxwell equation. In this case, we have established the following result.

\begin{theorem}\label{teoprincipal}
$(M, g, E)$ be a three-dimensional, time-symmetric initial data set for the Einstein-Maxwell equation with $E$ is the electric field. Assume that the charge density is zero ${\rm div} E = 0$, that the magnetic field vanishes, and that the non-electromagnetic matter fields satisfy the dominant energy condition $R\geq 2+2|E|^2$. If $\Sigma\subset M$ is an embedded strictly stable minimal two-sphere which locally maximizes the charged Hawking mass, then the Gauss curvature of $\Sigma$ is
constant equal to $1/a^2$ for some $a\in (0, 1)$ and a neighbourhood of $\Sigma$ in $(M,g)$
is isometric to the deSitter Reissner-Nordstr$\ddot{o}$m space $((-\epsilon, \epsilon) \times\Sigma, g_{Q,a})$ for some $\epsilon > 0$ and $Q=\frac{1}{4\pi}\int_{\Sigma}\langle E,\nu\rangle$.
\end{theorem}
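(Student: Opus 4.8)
The plan is to adapt the strategy of M\'aximo--Nunes \cite{mn} (which in turn builds on Bray--Brendle--Neves \cite{Brendle}), inserting the electromagnetic terms and using crucially that ${\rm div}\,E=0$. \emph{Step 1 (the CMC foliation and constancy of the charge).} Since $\Sigma$ is a strictly stable minimal two-sphere, the first eigenvalue of $-L$, $L=\Delta_\Sigma+|A|^2+{\rm Ric}(\nu,\nu)$, is positive, so $L$ is invertible and the implicit function theorem produces a foliation $\{\Sigma_t\}_{|t|<\epsilon}$ of a neighbourhood $U$ of $\Sigma$ by two-spheres of constant mean curvature $H(t)$, with $\Sigma_0=\Sigma$ and $H(0)=0$; write $g=w(x,t)^2\,dt^2+g_t$ on $U$, where $w>0$ is the lapse and $g_t$ the induced metric on $\Sigma_t$, and set $A(t)=|\Sigma_t|$. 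Because ${\rm div}\,E=0$ and any two leaves cobound a region in $U$, the divergence theorem gives $Q(\Sigma_t)\equiv Q(\Sigma)=:Q$, so the charge term in $m_{{\rm CH}}(\Sigma_t)$ is just $4\pi Q^2/A(t)$.

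\emph{Step 2 (variation of the charged Hawking mass along the foliation).} Using $\partial_t\,d\sigma_t=Hw\,d\sigma_t$, the constancy of $H$ on each leaf, and $\partial_t H=-Lw=-\Delta_{\Sigma_t}w-(|A_t|^2+{\rm Ric}(\nu_t,\nu_t))w$ — which is therefore constant on $\Sigma_t$, so $H'(t)A(t)=-\int_{\Sigma_t}(|A_t|^2+{\rm Ric}(\nu_t,\nu_t))w\,d\sigma_t$ — one checks $A'(0)=0$ and $\tfrac{d}{dt}\big|_0 m_{{\rm CH}}(\Sigma_t)=0$ directly from $H(0)=0$. Thus $t=0$ is a critical point of $t\mapsto m_{{\rm CH}}(\Sigma_t)$, and the hypothesis that $\Sigma$ locally maximizes $m_{{\rm CH}}$ forces $\tfrac{d^2}{dt^2}\big|_0 m_{{\rm CH}}(\Sigma_t)\le 0$. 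A further differentiation (writing $A_0=|\Sigma|$, $\zeta=\inf_M(R-2|E|^2)$, $A''(0)=H'(0)\int_\Sigma w\,d\sigma$) leads to a formula of the form
\begin{equation*}
\tfrac{d^2}{dt^2}\Big|_0 m_{{\rm CH}}(\Sigma_t)=\Big(\tfrac{A_0}{16\pi}\Big)^{1/2}\Big[\,A''(0)\Big(\tfrac{1}{2A_0}-\tfrac{2\pi Q^2}{A_0^2}-\tfrac{\zeta}{16\pi}\Big)-\tfrac{A_0}{8\pi}\,H'(0)^2\,\Big].
\end{equation*}

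\emph{Step 3 (the rigidity inequality).} This is the heart of the matter. Substituting the Gauss equation $2K_\Sigma=R-2{\rm Ric}(\nu,\nu)-|A|^2$ (on $\Sigma$, where $H=0$, so $|A|^2=|\mathring A|^2$) and the Gauss--Bonnet identity $\int_\Sigma K_\Sigma=4\pi$ into the expression for $H'(0)A_0$, using the dominant energy condition $R-2|E|^2\ge 2$ (so in particular $\zeta\ge 2$, and in fact $\zeta=2$ is forced), the inequality $\int_\Sigma|E|^2\ge 16\pi^2Q^2/A_0$ coming from Cauchy--Schwarz and the definition of $Q(\Sigma)$, and the strict stability of $\Sigma$ (which forces $H'(0)>0$ after orienting $w>0$, determines the lapse as the positive solution of $-Lw=H'(0)$, and controls the oscillation of $w$), one shows that the bracket above is bounded below by a sum of manifestly nonnegative quantities — the $L^2$-norms over $\Sigma$ of $\mathring A$, of $R-2|E|^2-2$, and of the oscillations of $w$ and of $K_\Sigma$ — plus a multiple of $a^2-Q^2-a^4$, where $a$ is defined by $A_0=4\pi a^2$. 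The comparison closes only when $\tfrac{1-\sqrt{1-4Q^2}}{2}<a^2<\tfrac{1+\sqrt{1-4Q^2}}{2}$, whence $a\in(0,1)$; and combining with $\tfrac{d^2}{dt^2}\big|_0 m_{{\rm CH}}(\Sigma_t)\le 0$ forces every one of those nonnegative terms to vanish. On $\Sigma$ one therefore gets $\mathring A\equiv 0$ (so, being also minimal, $\Sigma$ is totally geodesic), $R-2|E|^2\equiv 2$, $w$ constant, equality in the charge Cauchy--Schwarz so that $E=\tfrac{Q}{a^2}\nu$, and $K_\Sigma\equiv 1/a^2$.

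\emph{Step 4 (propagation and identification of the metric).} The identities produced in Step~3 propagate to all of $U$: Step~3 also forces $m_{{\rm CH}}(\Sigma_t)$ to be constant in $t$, so each leaf $\Sigma_t$ is again a strictly stable constant-mean-curvature two-sphere locally maximizing $m_{{\rm CH}}$ and the argument applies to it (alternatively, one propagates the pointwise identities along the normal geodesics using the Einstein--Maxwell constraint and the evolution equations). One concludes that on $U$ the leaves are umbilic with constant Gauss curvature, the lapse depends only on $t$, and $R-2|E|^2\equiv 2$; reparametrizing $t$ by arclength $s$ turns the metric into a warped product $g=ds^2+u(s)^2 g_{\mathbb{S}^2}$ with $g_{\mathbb{S}^2}$ round of Gauss curvature $1/a^2$ and $u(s_0)=a$, $u'(s_0)=0$ at the neck. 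Then ${\rm div}\,E=0$ with $Q(\Sigma_s)\equiv Q$ forces $E=\tfrac{Q}{u(s)^2}\partial_s$, so $|E|^2=Q^2/u(s)^4$, and $R-2|E|^2\equiv 2$ becomes precisely the ODE \eqref{edo} for $u$; by uniqueness of solutions of \eqref{edo} with initial data $u(s_0)=a$, $u'(s_0)=0$, the neighbourhood $U$ is isometric to the corresponding piece of the Reissner--Nordstr\"om--de Sitter space $((-\epsilon,\epsilon)\times\Sigma,g_{Q,a})$ with $Q=\tfrac{1}{4\pi}\int_\Sigma\langle E,\nu\rangle$, as claimed. The main obstacle is Step~3: deducing from the second-variation identity a lower bound by a sum of nonnegative terms. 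Unlike the uncharged case of \cite{mn}, one must control simultaneously the oscillation of the lapse (through $-Lw=H'(0)$ and strict stability), the Gauss--Bonnet defect $\int_\Sigma(1/a^2-K_\Sigma)w$, and the charge term $4\pi Q^2/A(t)$; these combine with the right sign only in the stated range of $a^2$, and matching all the constants with \eqref{edo} is what pins down $a\in(0,1)$.
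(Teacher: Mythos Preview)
Your overall architecture (CMC foliation, charge constancy via ${\rm div}\,E=0$, warped-product identification via the ODE \eqref{edo}) matches the paper's, but there is a genuine gap in Step~4, and Step~3 is too vague to stand as written.

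\textbf{Step 3.} The paper does not try to squeeze all the infinitesimal rigidity out of a single second-variation inequality along the CMC foliation. Instead (Proposition~\ref{locally}) it plugs the \emph{first eigenfunction} of $L$ into the second-variation formula of Proposition~\ref{2vari} to get $(\lambda_1(L)+1)|\Sigma|+16\pi^2Q^2/|\Sigma|\ge 4\pi$, and plugs $\phi\equiv 1$ into the stability inequality \eqref{stability}, combined with Gauss--Bonnet, $R\ge 2+2|E|^2$, and H\"older on the charge, to get the reverse inequality. Equality then forces $A\equiv 0$, $R=2+2|E|^2$, $K_\Sigma=4\pi/|\Sigma|$, and $Ric(\nu,\nu)=-\lambda_1(L)$ on $\Sigma$. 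Your ``bounded below by a sum of manifestly nonnegative quantities'' is asserted rather than proved, and your formula for $\tfrac{d^2}{dt^2}\big|_0 m_{CH}$ involving only $A''(0)$ and $H'(0)^2$ already discards the very terms (oscillation of $w$, traceless second fundamental form, etc.) you later claim to control. Also note the sign: with the paper's conventions $H'(0)=L(1)=Ric(\nu,\nu)=-\lambda_1(L)<0$, not $>0$.

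\textbf{Step 4.} This is the real gap. From the second derivative at $t=0$ alone you cannot conclude that $m_{CH}(\Sigma_t)$ is constant on $(-\epsilon,\epsilon)$; you only know it has a critical point with nonpositive second derivative there. Your suggestion that ``each leaf $\Sigma_t$ is again \ldots\ locally maximizing $m_{CH}$ and the argument applies to it'' is circular: the leaves $\Sigma_t$ for $t\ne 0$ are \emph{not minimal}, so neither the hypotheses of the theorem nor your Step~3 apply to them. The paper closes this gap by computing the \emph{first} derivative $\tfrac{d}{dt}m_{CH}(\Sigma_t)$ for all $t$ along the CMC foliation, rewriting it via the Gauss equation as a sum of sign-definite pieces (using $R\ge 2+2|E|^2$, $|A_t|^2\ge \tfrac12 H_t^2$, and H\"older on the charge) plus an oscillation term $\int_{\Sigma_t}(Ric(\nu_t,\nu_t)+|A_t|^2)(\rho_t-\overline{\rho_t})\,d\sigma_t$. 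This last term is controlled by Lemma~\ref{auxineq} (borrowed from \cite{mn}), and together with the sign of $H(t)$ coming from $H'(0)<0$ one obtains the monotonicity $m_{CH}(\Sigma_t)\ge m_{CH}(\Sigma)$. The local-max hypothesis then forces $m_{CH}(\Sigma_t)\equiv m_{CH}(\Sigma)$, and \emph{that} is what yields umbilicity, $R-2|E|^2=2$, $|E|$ constant, and $\rho_t\equiv\overline{\rho_t}$ on every leaf --- after which your Step~4 identification goes through. Without this monotonicity argument (or a genuine substitute), the propagation from $\Sigma$ to the neighbourhood is unjustified.
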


To finish, we shall provide, in some sense, a generalization in the charged setting of the result due to Bray, Brendle and Neves \cite{Brendle}, see also \cite{Micallef}. We now state our last result.

\begin{theorem}\label{teoprincipalN}
Let $(M, g, E)$ be a three-dimensional, time-symmetric initial data set  for the Einstein-Maxwell equation with $E$ is the electric field. Assume that the charge density is zero $div E = 0$, that the magnetic field vanishes, and that the non-electromagnetic matter fields satisfy the dominant energy condition $R\geq 2 + 2|E|^2$. If $\Sigma$ is a 2-sphere in $M$ which
is locally area minimizing, then the area of $\Sigma$ satisfies,
\begin{equation}\label{inqN}
|\Sigma|+\frac{16\pi^2Q(\Sigma)^2}{|\Sigma|}\leq 4\pi.
\end{equation}
Moreover, if equality holds then the Gauss curvature of $\Sigma$ is constant equal to $1+|E|^2$ and a neighbourhood of $\Sigma$ in $(M, g)$  is isometric to the charged Nariai.
\end{theorem}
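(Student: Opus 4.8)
The plan is to adapt the Bray--Brendle--Neves argument for area--minimizing two--spheres \cite{Brendle} to the charged setting, with the bound $R\ge 2+2|E|^2$ playing the role of $R\ge 2$ and the charge entering through the divergence theorem and the Cauchy--Schwarz inequality. First, since $\Sigma$ is locally area minimizing it is a closed stable minimal two--sphere, so $H\equiv 0$ and $\int_\Sigma\big(|\nabla f|^2-(Ric(\nu,\nu)+|A|^2)f^2\big)\,d\sigma\ge 0$ for every $f$; taking $f\equiv 1$ yields $\int_\Sigma(Ric(\nu,\nu)+|A|^2)\,d\sigma\le 0$. The traced Gauss equation for a minimal surface gives $Ric(\nu,\nu)+|A|^2=\tfrac12 R-K_\Sigma+\tfrac12|A|^2$, so that, with $\int_\Sigma K_\Sigma\,d\sigma=4\pi$ by Gauss--Bonnet, $\tfrac12\int_\Sigma R\,d\sigma+\tfrac12\int_\Sigma|A|^2\,d\sigma\le 4\pi$. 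Using $R\ge 2+2|E|^2$ together with the Cauchy--Schwarz bound $16\pi^2Q(\Sigma)^2=\big(\int_\Sigma\langle E,\nu\rangle\,d\sigma\big)^2\le|\Sigma|\int_\Sigma|E|^2\,d\sigma$ one obtains
\[
|\Sigma|+\frac{16\pi^2Q(\Sigma)^2}{|\Sigma|}\ \le\ |\Sigma|+\int_\Sigma|E|^2\,d\sigma\ \le\ \tfrac12\int_\Sigma R\,d\sigma\ \le\ 4\pi-\tfrac12\int_\Sigma|A|^2\,d\sigma\ \le\ 4\pi,
\]
which is \eqref{inqN}.

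Suppose now equality holds. Then each inequality above must be an equality: $R=2+2|E|^2$ on $\Sigma$; $A\equiv 0$, so $\Sigma$ is totally geodesic; $\int_\Sigma(Ric(\nu,\nu)+|A|^2)\,d\sigma=0$, and since $f\equiv 1$ then realizes equality in the stability inequality, it lies in the kernel of the Jacobi operator, forcing $Ric(\nu,\nu)+|A|^2\equiv 0$ and hence $Ric(\nu,\nu)\equiv 0$ pointwise; and equality in Cauchy--Schwarz forces $E$ to be normal to $\Sigma$ with $|E|\equiv|E|_0$ constant. The Gauss equation then gives $K_\Sigma=\tfrac12 R-Ric(\nu,\nu)=1+|E|_0^2$, constant, so $(\Sigma,g|_\Sigma)$ is a round sphere; Gauss--Bonnet gives $|\Sigma|=4\pi/(1+|E|_0^2)$, and combined with $4\pi Q(\Sigma)=|E|_0\,|\Sigma|$ this yields $|\Sigma|>4\pi|Q(\Sigma)|$ as long as $|E|_0<1$.

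To propagate this to a neighbourhood, construct, following \cite{Brendle} via the implicit function theorem (a Lyapunov--Schmidt reduction being needed when $\Sigma$ is only stable, its Jacobi operator then having one--dimensional kernel), a foliation $\{\Sigma_t\}_{|t|<\epsilon}$ of a neighbourhood of $\Sigma_0=\Sigma$ by two--spheres of constant mean curvature $H(t)$ with $H(0)=0$. Write $\partial_t=\rho_t\nu_t$ with lapse $\rho_t>0$; since $\mathrm{div}\,E=0$ and the $\Sigma_t$ are homologous, $Q(\Sigma_t)\equiv Q:=Q(\Sigma)$. From $\partial_t H=-\Delta_{\Sigma_t}\rho_t-(|A_t|^2+Ric(\nu_t,\nu_t))\rho_t$, dividing by $\rho_t$, integrating over $\Sigma_t$, and using the traced Gauss equation, Gauss--Bonnet, $R\ge 2+2|E|^2$, Cauchy--Schwarz and $Q(\Sigma_t)\equiv Q$, one gets
\[
H'(t)\int_{\Sigma_t}\rho_t^{-1}\,d\sigma=-\int_{\Sigma_t}\frac{|\nabla\rho_t|^2}{\rho_t^2}\,d\sigma-\int_{\Sigma_t}\big(|A_t|^2+Ric(\nu_t,\nu_t)\big)\,d\sigma\ \le\ -\Big(|\Sigma_t|+\frac{16\pi^2Q^2}{|\Sigma_t|}-4\pi\Big).
\]
Since $\Sigma_0$ is area minimizing, $|\Sigma_t|\ge|\Sigma_0|$, and as $|\Sigma_0|>4\pi|Q|$ the function $x\mapsto x+16\pi^2Q^2/x$ is non--decreasing on $[|\Sigma_0|,\infty)$, so the right--hand side is $\le 0$; hence $H'(t)\le 0$ and $tH(t)\le 0$. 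Then $\frac{d}{dt}|\Sigma_t|=\int_{\Sigma_t}H(t)\rho_t\,d\sigma$ has sign opposite to $t$, so $|\Sigma_t|\le|\Sigma_0|$ near $t=0$; with $|\Sigma_t|\ge|\Sigma_0|$ this forces $|\Sigma_t|\equiv|\Sigma_0|$ and hence $H(t)\equiv 0$. Feeding this back, $H'(t)\equiv 0$ makes the displayed identity a sum of two non--negative quantities equal to zero; therefore $\rho_t$ is constant on each $\Sigma_t$, $\int_{\Sigma_t}(|A_t|^2+Ric(\nu_t,\nu_t))\,d\sigma=0$, and chasing the remaining equalities gives $R=2+2|E|^2$, $A_t\equiv 0$, $E$ normal with $|E|\equiv|E|_0$, and $Ric(\nu_t,\nu_t)\equiv 0$ pointwise. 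Reparametrizing $t$ so that $\rho_t\equiv 1$, the metric near $\Sigma$ is $dt^2+g_t$ with $\partial_t g_t=2A_t\equiv 0$; hence $g=dt^2+g_0$ on $(-\epsilon,\epsilon)\times\Sigma$, where $g_0$ is round of Gauss curvature $1+|E|_0^2$ and $E$ is a constant multiple of $\partial_t$ --- i.e.\ the charged Nariai space.

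The main obstacle is the construction of the constant--mean--curvature foliation when $\Sigma$ is not strictly stable, which requires the Lyapunov--Schmidt reduction of \cite{Brendle}, together with the verification that the lapse may be taken to depend only on $t$; a secondary bookkeeping point is to ensure $|\Sigma_0|>4\pi|Q|$ (equivalently $|E|_0<1$), so that $x+16\pi^2Q^2/x$ is monotone on the relevant range along the foliation.
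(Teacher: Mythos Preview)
Your derivation of the area--charge inequality \eqref{inqN} and the identification of the equality conditions on $\Sigma$ (totally geodesic, $R=2+2|E|^2$, $Ric(\nu,\nu)=0$, $E$ normal with constant norm, $K_\Sigma=1+|E|^2$) coincide with the paper's. Both arguments then build the CMC foliation $\{\Sigma_t\}$ of Proposition~\ref{cmc} and arrive at the same differential inequality
\[
H'(t)\int_{\Sigma_t}\rho_t^{-1}\,d\sigma_t\ \le\ -\Big(|\Sigma_t|+\frac{16\pi^2Q^2}{|\Sigma_t|}-4\pi\Big).
\]
The divergence is in how this is exploited. You invoke the monotonicity of $x\mapsto x+16\pi^2Q^2/x$ on $[|\Sigma_0|,\infty)$ to conclude the right side is $\le 0$, hence $H'\le 0$ directly; this is clean, but it requires $|\Sigma_0|\ge 4\pi|Q|$, equivalently $|E|_0\le 1$. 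The paper instead keeps the (possibly positive) right side, rewrites it via the equality on $\Sigma_0$ and $|\Sigma_t|\ge|\Sigma_0|$ as
\[
-\Big(|\Sigma_t|+\tfrac{16\pi^2Q^2}{|\Sigma_t|}-4\pi\Big)\le \frac{16\pi^2Q^2}{|\Sigma_0|}\Big(\frac{|\Sigma_t|-|\Sigma_0|}{|\Sigma_t|}\Big)=\frac{16\pi^2Q^2}{|\Sigma_0|\,|\Sigma_t|}\int_0^t H(s)\Big(\!\int_{\Sigma_s}\rho_s\Big)ds,
\]
and then runs a short Gr\"onwall/mean--value contradiction (its Claim~1) to force $H(t)\le 0$ on $[0,\delta)$ regardless of whether $|\Sigma_0|\ge 4\pi|Q|$.

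The point you flag as ``secondary bookkeeping'' is therefore precisely where the paper's extra work sits, and it is not cosmetic: the equality $|\Sigma_0|+16\pi^2Q^2/|\Sigma_0|=4\pi$ admits both roots $|\Sigma_0|=2\pi\big(1\pm\sqrt{1-4Q^2}\big)$, and nothing in the hypotheses singles out the larger one. On the small root one has $|\Sigma_0|<4\pi|Q|$, the map $x\mapsto x+16\pi^2Q^2/x$ is \emph{strictly decreasing} near $|\Sigma_0|$, so for $|\Sigma_t|$ slightly above $|\Sigma_0|$ the right side is strictly positive and your conclusion $H'(t)\le 0$ does not follow. Either you must supply an a priori reason why $|E|_0<1$ (none is visible from stability and the energy condition alone), or replace the monotonicity step by the paper's integral bound plus contradiction. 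Once $H\equiv 0$ is established, your endgame (constancy of $\rho_t$, $A_t\equiv 0$, product splitting $g=dt^2+g_\Sigma$) matches the paper's.
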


\section{Preliminaries}
In this section we shall present a couple of results that will be useful in the proof of our main theorem. Let $\Sigma\subset(M,g)$ be a surface with unit normal vector field $\nu,$ second fundamental form $A,$ and mean curvature $H$. The Jacobi Operator associated to $\Sigma$ is given by
\begin{equation}\label{jacobi}
 L = \Delta_{\Sigma}+Ric(\nu, \nu) + |A|^2,
\end{equation}
where $\Delta_{\Sigma}$ denotes the Laplacian of $\Sigma$ in the induced metric and $Ric$ is the Ricci curvature of $M$. As usual, we denote by $\lambda_1(L)$  the first eigenvalue  of $L$.

Hence, we may associate to $L$ a quadratic form $J(\phi)=-\int_{\Sigma}\phi L\phi d\sigma$, for $\phi\in C^{\infty}(\Sigma)$.  An embedded surface $\Sigma$ is called stable if and only if $J(\phi)\geq 0$ for all $\phi\in C^{\infty}(\Sigma)$, in this case $\lambda_1(L)$ is nonnegative. If the  smallest eigenvalue is positive we say the surface is \textit{strictly stable}. Equivalently, $J(\phi)\geq \lambda_1(L)\int_{\Sigma}\phi^2d\sigma$, which becomes
\begin{equation}\label{stability}
\lambda_1(L)\int_{\Sigma}\phi^2d\sigma+\int_{\Sigma}(Ric(\nu,\nu)+|A|^2)\phi^2d\sigma\leq\int_{\Sigma}|\nabla \phi|^2d\sigma,
\end{equation}
for any $\phi\in C^{\infty}(\Sigma)$.

Proceeding, we should consider $\Sigma_{t}\subset M$ as a smooth normal variation of $\Sigma$, i.e.,  
$$\Sigma_{t}=\{f(t,x):x\in\Sigma\}$$
where $f:(-\epsilon,\epsilon)\times\Sigma\rightarrow M$ is a smooth map such that: 
\begin{itemize}
\item for each $t\in(-\epsilon,\epsilon),$ $f_{t}=f(t,\cdot)$ is an immersion;
\item $f(\{0\}\times\Sigma)=\Sigma$;
\item $\frac{\partial f}{\partial t}=\phi(x)\nu(x)$ for each $x\in\Sigma,$ where $\phi\in C^{\infty}(\Sigma).$
\end{itemize}

Before presenting the first variation formula of the charged Hawking mass, let us point out that throughout this paper we will be considering the charge density zero, i.e., ${\rm div}=0$ and consequently, using the Divergence Theorem over the region enclosed between $\Sigma_{0}$ and $\Sigma(t)$, we have   
\begin{eqnarray*}
0&=&\frac{1}{4\pi}\Big(\int_{\Sigma(t)}\langle E, \nu_t\rangle d\sigma_t+\int_{\Sigma_0}\langle E, \nu_0\rangle d\sigma\Big)\\
&=&Q(\Sigma(t))-Q(\Sigma_0),
\end{eqnarray*}
which implies that $Q(\Sigma(t))=Q(\Sigma_0)$ for all smooth normal variations $\Sigma(t)$ of $\Sigma$. Now, after to apply \cite[Proposition A.1]{mn} we obtain:

\begin{proposition}[First variation of the charged Hawking mass]\label{var1} Under the considerations of (\ref{cmh}) we have{\setlength\arraycolsep{2pt}
\begin{eqnarray*}
\frac{d}{dt}m_{{\rm CH}}(\Sigma_{t})\Big|_{t=0}&=&- \frac{2|\Sigma|^{1/2}}{(16\pi)^{3/2}}\int_{\Sigma}\phi\Delta_{\Sigma} H d\sigma+\frac{|\Sigma|^{1/2}}{(16\pi)^{3/2}}\int_{\Sigma}(\Lambda-R)H\phi d\sigma\\
&+&\frac{|\Sigma|^{1/2}}{(16\pi)^{3/2}}\int_{\Sigma}\Big[2K_{\Sigma}-\frac{8\pi}{|\Sigma|}+\Big(\frac{32\pi^2 Q(\Sigma)^2}{|\Sigma|^2}-|A|^2\\
&+&\frac{1}{2|\Sigma|}\int_{\Sigma}H^2d\sigma\Big)\Big]H\phi d\sigma.
\end{eqnarray*}}
\end{proposition}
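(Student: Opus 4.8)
The plan is to reduce the computation to the variation of two scalar quantities, the area $A(t)=|\Sigma_t|$ and the Willmore-type energy $W(t)=\int_{\Sigma_t}H^2\,d\sigma_t$, and then to isolate the genuinely new contribution coming from the charge. Two facts recorded just before the statement make this feasible. First, because the charge density vanishes, ${\rm div}\,E=0$, the divergence theorem gives $Q(\Sigma_t)=Q(\Sigma_0)$ for every smooth normal variation, so $Q(\Sigma)$ is \emph{constant} along the flow. Second, $\zeta=\inf_M(R-2|E|^2)$ is a fixed constant of the ambient data and does not vary either. Writing out \eqref{cmh}, the charged Hawking mass then takes the form
\begin{equation*}
m_{{\rm CH}}(\Sigma_t)=\Big(\frac{A(t)}{16\pi}\Big)^{1/2}\Big(1-\frac{W(t)}{16\pi}-\frac{\zeta A(t)}{24\pi}+\frac{4\pi Q^2}{A(t)}\Big),
\end{equation*}
so that only $A(t)$ and $W(t)$ need be differentiated.

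Next I would split $m_{\rm CH}$ as the uncharged Hawking mass $m_{H}(\Sigma)=(\frac{|\Sigma|}{16\pi})^{1/2}(1-\frac{1}{16\pi}\int_\Sigma(H^2+\frac{2}{3}\zeta)\,d\sigma)$ plus the charge correction $(\frac{A}{16\pi})^{1/2}\frac{4\pi Q^2}{A}=\frac{4\pi Q^2}{(16\pi)^{1/2}}A^{-1/2}$, and apply \cite[Proposition A.1]{mn} to the first summand. That proposition delivers exactly the first variation of $m_H$, producing the term $-\frac{2|\Sigma|^{1/2}}{(16\pi)^{3/2}}\int_\Sigma\phi\Delta_\Sigma H\,d\sigma$, the term $\frac{|\Sigma|^{1/2}}{(16\pi)^{3/2}}\int_\Sigma(\Lambda-R)H\phi\,d\sigma$, and the portion $2K_\Sigma-\frac{8\pi}{|\Sigma|}-|A|^2+\frac{1}{2|\Sigma|}\int_\Sigma H^2\,d\sigma$ of the last integral. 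The ingredients behind that formula are the first variation of the area element, the linearization of the mean curvature $\partial_t H=\Delta_\Sigma\phi+(|A|^2+Ric(\nu,\nu))\phi$, one integration by parts on the closed surface $\Sigma$ turning $\int_\Sigma H\Delta_\Sigma\phi$ into $\int_\Sigma\phi\Delta_\Sigma H$, and the Gauss equation $2K_\Sigma=R-2Ric(\nu,\nu)+H^2-|A|^2$, used to trade $Ric(\nu,\nu)$ for the intrinsic and scalar-curvature terms.

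It then remains to differentiate the charge correction. Since $Q$ is constant, $\frac{d}{dt}\big(\frac{4\pi Q^2}{(16\pi)^{1/2}}A^{-1/2}\big)\big|_{t=0}=-\frac{2\pi Q^2}{(16\pi)^{1/2}}A^{-3/2}A'(0)$, and inserting the first variation of area $A'(0)=-\int_\Sigma H\phi\,d\sigma$ together with the rewriting $\frac{2\pi Q^2}{(16\pi)^{1/2}}A^{-3/2}=\frac{|\Sigma|^{1/2}}{(16\pi)^{3/2}}\frac{32\pi^2Q^2}{|\Sigma|^2}$ produces precisely the missing term $\frac{|\Sigma|^{1/2}}{(16\pi)^{3/2}}\int_\Sigma\frac{32\pi^2Q^2}{|\Sigma|^2}H\phi\,d\sigma$. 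Adding the two contributions yields the stated identity.

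I expect the delicate part to be sign and constant bookkeeping rather than any single hard estimate. The stated formula forces the convention $\vec H=H\nu$ (equivalently $\partial_t H=\Delta_\Sigma\phi+(|A|^2+Ric(\nu,\nu))\phi$ together with $\partial_t d\sigma=-H\phi\,d\sigma$): it is exactly this choice that yields the coefficient $-2$ in front of $\int_\Sigma\phi\Delta_\Sigma H$ rather than $+2$, and the opposite normal reverses several signs, including that of $A'(0)$, so the convention used in \cite[Proposition A.1]{mn} and in the charge-term computation must agree throughout. The second subtlety is pinning down the numerical constants multiplying $\frac{8\pi}{|\Sigma|}$, $\frac{32\pi^2Q^2}{|\Sigma|^2}$ and $\frac{1}{2|\Sigma|}\int_\Sigma H^2$, all of which emerge from combining the derivative of the prefactor $(\frac{A}{16\pi})^{1/2}$ with the derivative of the bracket; the $A^{3/2}$- and $A^{-1/2}$-homogeneities of the cosmological and charge terms are what fix these constants, and the factor multiplying $(\Lambda-R)$ is inherited directly from \cite[Proposition A.1]{mn}.
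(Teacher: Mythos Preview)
Your proposal is correct and follows essentially the same route as the paper: the paper's proof consists of the observation (made just before the statement) that $Q(\Sigma_t)$ is constant along the variation by the divergence theorem, followed by a direct appeal to \cite[Proposition A.1]{mn}; you have simply unpacked that appeal and isolated the extra charge-correction term explicitly. The additional care you take with sign conventions and constant bookkeeping is appropriate and matches the conventions used later in the paper (e.g.\ item (i) in the proof of Proposition~\ref{2vari}).
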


For our purpose, considering a minimal surface, the second variation formula can be expressed as follows: 

\begin{proposition}\label{2vari}(Second variation of the charged Hawking mass). If $\Sigma\subset M$ is a minimal surface that is critical point of the charged Hawking mass, then
{\setlength\arraycolsep{2pt}
\begin{eqnarray*}
\frac{d^2}{dt^2}m_{{\rm CH}}(\Sigma(t))\Big|_{t=0}&=&\frac{|\Sigma|^{1/2}}{32\pi^{3/2}}\left[\left(\frac{|\Sigma|\Lambda-8\pi}{2|\Sigma|}+\frac{16\pi^{2}}{|\Sigma|^{2}}Q(\Sigma)^{2}\right)\int\phi L\phi d\sigma-\int_{\Sigma}(L\phi)^{2}d\sigma\right]
\end{eqnarray*}}
\end{proposition}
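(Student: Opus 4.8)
\textbf{Proof proposal for Proposition \ref{2vari}.}

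The plan is to differentiate the first variation formula of Proposition \ref{var1} once more in $t$ and evaluate at $t=0$, using crucially that $\Sigma$ is minimal and a critical point of $m_{\rm CH}$. First I would record the consequences of these hypotheses at $t=0$: since $\Sigma$ is minimal, $H=0$ on $\Sigma$, and since it is a critical point, the vanishing of $\frac{d}{dt}m_{\rm CH}(\Sigma_t)|_{t=0}$ from Proposition \ref{var1} forces the bracket multiplying $H\phi$ (together with the $\Delta_\Sigma H$ and $(\Lambda - R)H$ terms) to be consistent with $H\equiv 0$; more importantly, it tells us the standard fact that a minimal critical point of the Hawking-type mass satisfies an Euler--Lagrange condition that we will use to simplify the second derivative. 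When we differentiate the expression in Proposition \ref{var1}, every term there is a product of $\phi$, $H$ (or $\Delta_\Sigma H$), and geometric quantities; because $H=0$ at $t=0$, the only surviving contributions after applying the product rule are those in which the $t$-derivative lands on the factor $H$ (or $\Delta_\Sigma H$). Thus $\frac{d^2}{dt^2}m_{\rm CH}(\Sigma_t)|_{t=0}$ reduces to a sum of three pieces: one from $\partial_t(\Delta_\Sigma H)$, one from $(\Lambda - R)\partial_t H$, and one from the bracketed curvature term times $\partial_t H$.

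Next I would insert the first variation of the mean curvature under a normal deformation with speed $\phi\nu$, namely $\frac{\partial H}{\partial t}\big|_{t=0} = -\Delta_\Sigma \phi - (Ric(\nu,\nu)+|A|^2)\phi = -L\phi$, where $L$ is exactly the Jacobi operator in \eqref{jacobi}. Substituting $\partial_t H = -L\phi$ into the three surviving pieces gives, after an integration by parts to move the Laplacian off $\Delta_\Sigma(L\phi)$ onto $\phi$, a term proportional to $\int_\Sigma \phi \,\Delta_\Sigma(L\phi)\,d\sigma = \int_\Sigma (\Delta_\Sigma\phi)(L\phi)\,d\sigma$; combining this with the $-(Ric(\nu,\nu)+|A|^2)$ contribution and the Gauss-type curvature term assembles precisely $-\int_\Sigma (L\phi)^2\,d\sigma$ plus a multiple of $\int_\Sigma \phi L\phi\,d\sigma$. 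The coefficient of $\int_\Sigma \phi L\phi\,d\sigma$ is where the curvature data $R \geq 2\Lambda + 2|E|^2$, the Gauss equation $2K_\Sigma = R - 2Ric(\nu,\nu) - H^2 + |A|^2$, the charge $Q(\Sigma)$, and the normalization feed in: one uses $R - 2|E|^2 = 2\Lambda$ along the slice-type identities, the fact that on a minimal surface $|A|^2 = -2Ric(\nu,\nu)$ terms can be traded via Gauss, and Gauss--Bonnet $\int_\Sigma K_\Sigma\,d\sigma = 4\pi$ to collapse the geometric constants into $\frac{|\Sigma|\Lambda - 8\pi}{2|\Sigma|} + \frac{16\pi^2}{|\Sigma|^2}Q(\Sigma)^2$. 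Keeping careful track of the overall constant $\frac{|\Sigma|^{1/2}}{(16\pi)^{3/2}} = \frac{|\Sigma|^{1/2}}{32\pi^{3/2}}\cdot\frac{1}{2}\cdot 2$ and the factors of $2$ picked up in the integration by parts yields the stated prefactor $\frac{|\Sigma|^{1/2}}{32\pi^{3/2}}$.

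The main obstacle I expect is bookkeeping rather than conceptual: one must be scrupulous about which terms in Proposition \ref{var1} survive the product rule (a single misattributed derivative of $H$ versus a metric factor changes the answer), and the integration-by-parts step that converts $\int \phi\,\Delta_\Sigma(L\phi)$ into the quadratic form $\int \phi L\phi$ and the square $\int (L\phi)^2$ must be done so that the self-adjointness of $L$ is used correctly. A secondary subtlety is justifying that $\partial_t Q(\Sigma_t) = 0$ and $\partial_t |\Sigma_t| = \int_\Sigma H\phi\,d\sigma = 0$ at $t=0$ (both follow from $\mathrm{div}\,E = 0$ and minimality, as already noted in the preliminaries), so that the explicit $|\Sigma|$- and $Q$-dependent prefactors in Proposition \ref{var1} contribute nothing extra upon differentiation. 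Once these vanishings are in hand, the remaining computation is the routine substitution and simplification sketched above.
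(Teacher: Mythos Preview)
Your approach---differentiate the first variation formula of Proposition~\ref{var1} once more and substitute $\partial_t H$---does lead to the stated identity, but it is not the route the paper takes, and the simplification step you describe invokes the wrong ingredients. The paper instead differentiates $m_{\rm CH}(\Sigma_t)$ \emph{directly} twice: it writes $m_{\rm CH}$ as a sum of four pieces (coming from the factors $|\Sigma_t|^{1/2}$, $\int H_t^2\,d\sigma_t$, the $\zeta$-term, and $Q^2/|\Sigma_t|$), observes that minimality forces $\frac{d}{dt}|\Sigma_t|\big|_{t=0}=0$ so all cross terms in the product rule vanish, and then plugs in the classical second variation of area $\frac{d^2}{dt^2}|\Sigma_t|\big|_{t=0}=-\int_\Sigma\phi L\phi\,d\sigma$ together with $\frac{d^2}{dt^2}\int_{\Sigma_t}H_t^2\,d\sigma_t\big|_{t=0}=2\int_\Sigma(L\phi)^2\,d\sigma$. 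No Gauss equation, no Gauss--Bonnet, no scalar curvature identity is used; the four coefficients assemble immediately into the asserted formula.

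Your route requires more cancellation. After substituting $\partial_t H$ and integrating by parts, you are left with $\int_\Sigma\big[2(Ric(\nu,\nu)+|A|^2)+(\Lambda-R)+2K_\Sigma-|A|^2-\tfrac{8\pi}{|\Sigma|}+\tfrac{32\pi^2Q^2}{|\Sigma|^2}\big]\phi L\phi\,d\sigma$. Here the \emph{pointwise} Gauss equation $2K_\Sigma=R-2Ric(\nu,\nu)-|A|^2$ (valid on any minimal surface) cancels all the variable terms $R$, $K_\Sigma$, $Ric(\nu,\nu)$, $|A|^2$ at once, leaving only the constant $\Lambda-\tfrac{8\pi}{|\Sigma|}+\tfrac{32\pi^2Q^2}{|\Sigma|^2}$. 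You do \emph{not} need---and cannot use---the identity $R-2|E|^2=2\Lambda$ (it is not a hypothesis of the proposition), nor Gauss--Bonnet, nor any statement like ``$|A|^2=-2Ric(\nu,\nu)$''. If you had relied on those in an actual write-up you would either be assuming unproved facts or obscuring why the answer comes out constant in front of $\int\phi L\phi$. Also note that in this paper's conventions (see the Lemma quoting \cite{Huisken} and formula (i) in the proof) one has $\partial_t H=L\phi$, not $-L\phi$; with your sign the final expression comes out with the wrong overall sign.
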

\begin{proof}
Since the proof of this lemma is very short, we include it here for the sake of completeness. To start, taking into account the identities below
\begin{enumerate}
\item[(i)]  $\displaystyle\frac{d}{dt}(d\sigma_{t})\Big|_{t=0}=-\phi Hd\sigma$
\item[(ii)] $\displaystyle\frac{d^{2}}{dt^{2}}(d\sigma_{t})\Big|_{t=0}=[|\nabla\phi|^{2}-(Ric(\nu,\nu)+|A|^{2}){\phi}^{2}+H^{2}\phi^{2}+{\rm div}_{\Sigma}(\nabla_{X}X)]d\sigma,$
\end{enumerate}
where $X(x)=\frac{\partial f}{\partial t}(0,x),$ and as we are considering the minimal case, it is immediate to deduce

\begin{eqnarray*}
\frac{d^{2}}{dt^{2}}m_{CH}(\Sigma(t))\Big|_{t=0}&=&\frac{1}{8\pi^{1/2}|\Sigma|^{1/2}}\frac{d^{2}}{dt^{2}}(d\sigma_{t})\Big|_{t=0}-\frac{|\Sigma|^{1/2}}{(16\pi)^{3/2}}\frac{d^{2}}{dt^{2}}\left(\int_{\Sigma_{t}}H_{t}^{2}d\sigma_{t}\right)\Big|_{t=0}\\
&&-\frac{\Lambda|\Sigma|^{1/2}}{(16\pi)^{3/2}}\frac{d^{2}}{dt^{2}}(d\sigma_{t})\Big|_{t=0}-\frac{\pi^{1/2}Q^{2}}{2|\Sigma|^{3/2}}\frac{d^{2}}{dt^{2}}(d\sigma_{t})\Big|_{t=0}\\
&=&-\frac{1}{8\pi^{1/2}|\Sigma|^{1/2}}\int_{\Sigma}\phi L\phi d\sigma-\frac{|\Sigma|^{1/2}}{32\pi^{3/2}}\int_{\Sigma}(L\phi)^{2}d\sigma\\
&&+\frac{\Lambda|\Sigma|^{1/2}}{64\pi^{3/2}}\int_{\Sigma}\phi L\phi d\sigma+\frac{\pi^{1/2}Q^{2}}{2|\Sigma|^{3/2}}\int_{\Sigma}\phi L\phi d\sigma.
\end{eqnarray*}
This gives the requested result.
\end{proof}

\section{Charged Hawking mass and rigidy results}
In this section we shall prove Theorem~\ref{gra} announced in Section~\ref{sectionInt}. To do so, we start establishing some properties of a compact two-sided surface immersed in a Riemannian manifold endowed of a vector field $E$ satisfying ${\rm div}E=0$ and scalar curvature bounded from below by $2+2|E|^{2}.$ More precisely, we have the following result.

\begin{lemma}\label{cricalpoint}
Let $(M^3, g)$ be a Riemannian manifold, $E\in\mathfrak{X}(M)$ such that $div E=0$ with scalar curvature $R-2|E|^2\geq2$ and let us consider $\Sigma\subset M$ a compact two-sided surface with nonnegative mean curvature. In addition, if it is a critical point
for the charged Hawking mass, then
\begin{itemize}
\item[i)] either $\Sigma$ is minimal;
\item[ii)] or $\Sigma$  is umbilic, its Gaussian curvature satisfies $K_{\Sigma} = \frac{4\pi}{|\Sigma|}$ and  along $\Sigma$ $R-2|E|^2=2$, $E$ and $\nu$ normal field on $\Sigma$ are linearly dependent and $|E|^ 2$ is constant.
\end{itemize}
In particular, a closed two-sided surface $\Sigma$ in the deSitter Reissner-Nordstr$\ddot{o}$m $(\mathbb{R}\times\mathbb{S}^2, g_{Q,a})$ with nonnegative mean curvature is a critical
point of the charged Hawking mass if and only if it is minimal or a slice.
\end{lemma}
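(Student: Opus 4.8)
The strategy is to extract consequences from the critical point equation obtained by setting the first variation in Proposition~\ref{var1} equal to zero for \emph{all} test functions $\phi\in C^\infty(\Sigma)$. Since $\Sigma$ has nonnegative mean curvature, either $H\equiv 0$ on $\Sigma$ (case i), or the open set $U=\{H>0\}$ is nonempty. The plan is to show that on the closure of $U$ all the rigid conclusions of case (ii) hold, and then to argue by connectedness/continuity that in fact $U=\Sigma$. First I would note that the integrand of the first variation formula is $H\phi$ times a pointwise expression plus the term $-2\Delta_\Sigma H\,H\phi$; rewriting $\int \phi\Delta_\Sigma H\,d\sigma=\int H\phi\,\Delta_\Sigma(\text{stuff})$ is not quite available, so instead I would test against arbitrary $\phi$ supported in $U$ and use the fundamental lemma of the calculus of variations: the coefficient of $\phi$ must vanish identically on $U$. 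This yields the Euler--Lagrange equation
\[
-2\Delta_\Sigma H+(\Lambda-R)H+\Big[2K_\Sigma-\tfrac{8\pi}{|\Sigma|}+\tfrac{32\pi^2 Q(\Sigma)^2}{|\Sigma|^2}-|A|^2+\tfrac{1}{2|\Sigma|}\!\int_\Sigma H^2\Big]H=0
\]
on $U$.

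Next I would integrate this equation over $\Sigma$ (the left side is a divergence plus lower order, and $\Delta_\Sigma H$ integrates to zero), and combine with the Gauss equation $2K_\Sigma = R-2Ric(\nu,\nu)+H^2-|A|^2$ together with the Gauss--Bonnet theorem $\int_\Sigma K_\Sigma\,d\sigma=4\pi$ (using that $\Sigma$ is a sphere, which one gets since a stable-type/critical two-sided surface here is forced to be $S^2$; alternatively the statement already assumes the slice is a two-sphere in the application). The curvature hypothesis $R-2|E|^2\ge 2$ with $\Lambda=1$ enters as $\Lambda-R\le -2|E|^2\le 0$, and one should also invoke the pointwise inequality $|A|^2\ge \tfrac12 H^2$ for a surface (equality iff umbilic) and $32\pi^2 Q(\Sigma)^2/|\Sigma|^2\ge$ something — actually the sharp point is Cauchy--Schwarz on $Q(\Sigma)=\tfrac{1}{4\pi}\int_\Sigma\langle E,\nu\rangle$, giving $16\pi^2 Q(\Sigma)^2\le |\Sigma|\int_\Sigma|E|^2\,d\sigma$ with equality iff $E$ is a constant multiple of $\nu$ along $\Sigma$. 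Feeding all of these into the integrated identity should force every inequality to be an equality: $H$ is constant (so $\Delta_\Sigma H=0$ genuinely), $|A|^2=\tfrac12 H^2$ so $\Sigma$ is umbilic, $R-2|E|^2=2$ along $\Sigma$, $E\parallel\nu$ along $\Sigma$, and $|E|^2$ is constant along $\Sigma$; the remaining algebra then pins $K_\Sigma=4\pi/|\Sigma|$.

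I would then deal with the dichotomy: the equation forces $H$ to be a nonnegative constant on each component of $U$, hence (by continuity of $H$) on all of $\Sigma$; if that constant is positive we are in case (ii), if it is zero we are in case (i), and the two cannot both occur on a connected $\Sigma$. For the final ``in particular'' sentence, in the deSitter Reissner--Nordstr\"om space $(\mathbb{R}\times\mathbb{S}^2,g_{Q,a})$ one direction is clear: slices $\Sigma_r$ are umbilic with $H_r=2\rho(r)/r$, have $R-2|E|^2=2$, have $E=\tfrac{Q}{r^2}\rho(r)\partial_r$ parallel to $\nu=\rho(r)\partial_r$, constant $|E|^2$, and constant Gauss curvature $1/r^2=4\pi/|\Sigma_r|$, so they satisfy the constant Hawking mass property established in the introduction and hence are critical points; minimal surfaces in the Reissner--Nordstr\"om-de Sitter slice occur only at $\rho=0$, which in the doubled metric is the neck $\Sigma_0$, again a slice. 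For the converse, a two-sided surface with $H\ge 0$ that is a critical point is, by the lemma, minimal or a round umbilic sphere with the listed data; in a rotationally symmetric warped product $ds^2+u(s)^2 g_{\mathbb{S}^2}$ an umbilic sphere whose mean curvature and warping data match those of a slice must actually be a slice (this uses the uniqueness part of the ODE for $u$ and that $E$ points radially only along slices).

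The main obstacle I anticipate is the bookkeeping in the integrated Euler--Lagrange identity: one must show that the sum of several independently signed error terms — coming from $\Lambda-R+2|E|^2\le 0$, from $|A|^2-\tfrac12 H^2\ge 0$, and from the Cauchy--Schwarz defect $\int_\Sigma|E|^2 - 16\pi^2 Q^2/|\Sigma|\ge 0$ — assembles into a single nonpositive (or nonnegative) quantity whose vanishing is forced, so that \emph{all} the defects vanish simultaneously. Getting the coefficients to line up (they are exactly the coefficients appearing in Proposition~\ref{var1}, which is why that specific combination was chosen in the definition of $m_{\mathrm{CH}}$) is the crux; once the rigidity equalities are in hand, identifying the metric with a slice in the rotationally symmetric model via the ODE \eqref{edo} is routine.
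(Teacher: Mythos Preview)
Your plan has a real gap at the integration step. After testing the first variation against all $\phi$, you correctly obtain on $\Sigma$ the Euler--Lagrange equation, which in the paper's notation reads $\Delta_\Sigma H+\mathcal{Z}(\Sigma)\,H=0$. When you then ``integrate this equation over $\Sigma$'' you get $\int_\Sigma \mathcal{Z}(\Sigma)\,H\,d\sigma=0$, with $H$ still sitting as a weight. Gauss--Bonnet controls $\int_\Sigma K_\Sigma$, not $\int_\Sigma K_\Sigma H$; the H\"older/Cauchy--Schwarz estimate controls $\int_\Sigma |E|^2$ versus $16\pi^2Q(\Sigma)^2/|\Sigma|$, not the $H$--weighted versions. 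So the ``bookkeeping obstacle'' you anticipated is not just bookkeeping: with $H$ nonconstant the defects do not assemble into a single signed quantity, and no rigidity follows from $\int_\Sigma \mathcal{Z} H=0$ alone.

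The paper resolves this in two moves you are missing. First, apply the strong maximum principle to $\Delta_\Sigma H+\mathcal{Z}(\Sigma)H=0$ with $H\ge 0$: either $H\equiv 0$ or $H>0$ everywhere on $\Sigma$. (Your connectedness argument, by contrast, tries to deduce $H$ constant \emph{after} the rigidity, which is circular.) Second, once $H>0$, \emph{divide} by $H$ before integrating:
\[
\frac{1}{H}\Delta_\Sigma H+\mathcal{Z}(\Sigma)=0
\qquad\Longrightarrow\qquad
\int_\Sigma \mathcal{Z}(\Sigma)\,d\sigma=-\int_\Sigma\frac{|\nabla H|^2}{H^2}\,d\sigma\le 0.
\]
Now there is no weight, so Gauss--Bonnet and H\"older give $\int_\Sigma\mathcal{Z}\ge 0$ directly; the two inequalities force $\int_\Sigma\mathcal{Z}=0$ and $\nabla H=0$, and the equality cases in each of Gauss--Bonnet, $|A|^2\ge\tfrac12 H^2$, $R-2|E|^2\ge 2$, and the H\"older step on $Q(\Sigma)$ yield exactly the list in (ii). Your discussion of the ``in particular'' clause for the model space is fine once the main lemma is in hand.
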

\begin{proof}
Firstly, note that we may write the first variation of the charged Hawking mass presented
in Proposition \ref{var1}, as
$$
\frac{d}{dt}m_{{\rm CH}}(\Sigma(t))\Big|_{t=0}=- \frac{2|\Sigma|^{1/2}}{(16\pi)^{3/2}}\int_{\Sigma}(\Delta_{\Sigma}H+\mathcal{Z}(\Sigma)H)\phi d\sigma,$$
where $\mathcal{Z}(\Sigma)$ is the following function defined on any properly embedded surface
\begin{equation}\label{eq1}
\mathcal{Z}(\Sigma) =\frac{4\pi}{|\Sigma|} -K_{\Sigma}  -\frac{16\pi^2Q(\Sigma)^2}{|\Sigma|^2}+\frac{1}{2}(R-\zeta) +
\frac{1}{4}\Big(2|A|^2- \frac{1}{|\Sigma|}\int_{\Sigma}H^2 d\sigma\Big).
\end{equation}
Since $R\geq 2|E|^2+2$ and $|A|^2\geq \frac{1}{2}H^2,$ we use Gauss-Bonnet formula and the H${\rm \ddot{o}}$lder inequality, to arrive at
\begin{equation}\label{ld}
\int_{\Sigma}\mathcal{Z}(\Sigma)d\sigma\geq- \frac{16\pi^2Q(\Sigma)^2}{|\Sigma|}+\int_{\Sigma}|E|^2d\sigma\geq 0.
\end{equation}
Thus, as $\Sigma$ is a critical point we conclude
\begin{equation}
\int_{\Sigma}(\Delta_{\Sigma}H+\mathcal{Z}(\Sigma)H)\phi d\sigma=0. 
\end{equation}
By our hypothesis, $H\geq 0$. Then,
we may apply the Maximum Principle to infer that either $H\equiv 0$ or $H >0$. Supposing $\Sigma$ not minimal we have
\begin{equation}\label{eq2}
\frac{1}{H}\Delta_{\Sigma}H+\mathcal{Z}(\Sigma)=0.
\end{equation}
Therefore, upon integrating the identity \eqref{eq2} over $\Sigma$  we infer
\begin{equation*}
\int_{\Sigma}\mathcal{Z}(\Sigma)d\sigma=-\int_{\Sigma}\frac{|\nabla H|^2}{H^2}d\sigma.
\end{equation*}
This allow us to conclude that $\int_{\Sigma}\mathcal{Z}(\Sigma)d\sigma = 0$. From where we conclude that $H$ is constant. Now, from \eqref{eq2} we deduce that $\mathcal{Z}(\Sigma) = 0$ which implies that $\Sigma$ is umbilic, $R = 2|E|^2+ 2$ on $\Sigma$ and $K_{\Sigma}=\frac{4\pi}{|\Sigma|}$. Moreover, the equality in \eqref{ld}, implies  $E$ and $\nu$ normal field on $\Sigma$ are linearly dependent and $|E|^ 2$ is constant along $\Sigma$.
\end{proof}

According to Lemma \ref{cricalpoint}, $\Sigma_r$ is critical point for the charged Hawking mass,
moreover $\Sigma_r$ is umbilic, its Gaussian curvature is constant and the scalar curvature of $M^3$ restrict to $\Sigma_r$ satisfies $R=2|E|^2+2$. Now, since $|A_r|^2=\frac{H_r^2}{2}$ and
$$Ric(\nu_r,\nu_r)=\frac{1}{2}R-K_{\Sigma_r}+\frac{1}{2}H_r^2-\frac{1}{2}|A_r|^2,$$ 
we may deduce that  
\begin{equation}\label{Ricci}
Ric(\nu_r,\nu_r)+\frac{H_r^2}{2}=\frac{8\pi}{|\Sigma_r|}+\frac{64\pi^2Q^2}{|\Sigma_r|^2}-\frac{3}{4}\Big(\frac{16\pi}{|\Sigma_r|}\Big)^{3/2}m_{{\rm CH}}(\Sigma_r).
\end{equation}

Moreover, considering the minimal case, we use (\ref{cmh}) in order to rewrite (\ref{Ricci}) as 
\begin{equation}\label{RicH}
Ric(\nu_r,\nu_r)=1-\frac{4\pi}{|\Sigma_r|}+\frac{16\pi^{2}Q^{2}}{|\Sigma_r|^{2}}.
\end{equation}

\begin{proposition} Let $(\mathbb{R}\times\mathbb{S}^ 2 , g_{Q, a} )$ be the de Sitter Reissner-Nordstr$ \ddot{o}$m
with mass $m_{Q}>0$ and let $\Sigma_{r} = \{r\}\times\mathbb{S}^2$ be a minimal slice with $r\in\{2n\ ;\ n\in\mathbb{N}\}$. Then, there exists a constant $C =C(\Sigma_{r} ) > 0$ such that for all smooth normal variation $\Sigma(t)$ of $\Sigma_{r}$
\begin{equation}\label{des}
\frac{d^2}{dt^2}m_{{\rm CH}}(\Sigma(t))\Big|_{t=0} \leq  -C\int_{\Sigma_0}(\phi-\overline{\phi})^2d\sigma_0
\end{equation}
\end{proposition}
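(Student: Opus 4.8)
The plan is to start from the closed-form second variation in Proposition \ref{2vari} and show that the coefficient multiplying $\int_\Sigma \phi L\phi\,d\sigma$ is controlled, so that the quadratic form is strictly negative on the space of functions orthogonal to the constants. First I would rewrite
$$\frac{d^2}{dt^2}m_{{\rm CH}}(\Sigma(t))\Big|_{t=0}=\frac{|\Sigma_r|^{1/2}}{32\pi^{3/2}}\left[\beta\int_{\Sigma_r}\phi L\phi\,d\sigma_0-\int_{\Sigma_r}(L\phi)^2\,d\sigma_0\right],$$
where, using $\Lambda=1$, the constant is $\beta=\dfrac{|\Sigma_r|-8\pi}{2|\Sigma_r|}+\dfrac{16\pi^2}{|\Sigma_r|^2}Q^2$. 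Using \eqref{RicH}, namely $Ric(\nu_r,\nu_r)=1-\frac{4\pi}{|\Sigma_r|}+\frac{16\pi^2Q^2}{|\Sigma_r|^2}$, together with $|A_r|^2=\frac{1}{2}H_r^2=0$ for the minimal slice, one checks that $L=\Delta_{\Sigma_r}+Ric(\nu_r,\nu_r)$ with $Ric(\nu_r,\nu_r)=\beta+\left(\tfrac12-\tfrac{4\pi}{|\Sigma_r|}\right)$; more to the point, since $\Sigma_r$ is a round sphere of area $|\Sigma_r|$ its Gauss curvature is $K_{\Sigma_r}=\frac{4\pi}{|\Sigma_r|}$ and the eigenvalues of $-\Delta_{\Sigma_r}$ are $\frac{4\pi}{|\Sigma_r|}k(k+1)$, $k\ge 0$. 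Hence $L$ has eigenvalues $\mu_k=Ric(\nu_r,\nu_r)-\frac{4\pi}{|\Sigma_r|}k(k+1)$.

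Next I would decompose $\phi=\overline\phi+\sum_{k\ge 1}\phi_k$ into spherical harmonics, with $L\phi_k=\mu_k\phi_k$. On the zero mode $L\overline\phi=Ric(\nu_r,\nu_r)\overline\phi=\mu_0\overline\phi$, and a direct computation shows $\beta\mu_0-\mu_0^2=\mu_0(\beta-\mu_0)=\mu_0\left(\tfrac{4\pi}{|\Sigma_r|}-\tfrac12\right)$, which can be made to vanish or be handled by the choice of slice; the key point, however, is that because $\Sigma_r$ is a \emph{critical point} of the charged Hawking mass the first variation vanishes, and the relevant competitors are exactly those with $\overline\phi$ contributing no net change — this is why the estimate is stated with $\phi-\overline\phi$ on the right-hand side. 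For each $k\ge 1$ the contribution to the bracket is $(\beta\mu_k-\mu_k^2)\|\phi_k\|^2=\mu_k(\beta-\mu_k)\|\phi_k\|^2$. Since $\mu_k$ is strictly decreasing in $k$ and $\mu_1=Ric(\nu_r,\nu_r)-\frac{8\pi}{|\Sigma_r|}$, I would show $\mu_1<\beta$ — equivalently $Ric(\nu_r,\nu_r)-\frac{8\pi}{|\Sigma_r|}<\frac{|\Sigma_r|-8\pi}{2|\Sigma_r|}+\frac{16\pi^2Q^2}{|\Sigma_r|^2}$, i.e. $\frac12-\frac{4\pi}{|\Sigma_r|}<0$, which holds precisely when $|\Sigma_r|<8\pi$, i.e. $r^2<2$; this is exactly the strict-stability range $r^2<\frac{1+\sqrt{1-4Q^2}}{2}$ quoted from \cite{brad}. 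Thus $\mu_k(\beta-\mu_k)<0$ once $\mu_k>0$; and if $\mu_k\le 0$ for some $k\ge 1$ then also $\mu_k(\beta-\mu_k)\le 0$ since $\beta\ge\mu_1>\mu_k$ forces $\beta-\mu_k>0$. Either way every $k\ge 1$ term is $\le \mu_1(\beta-\mu_1)\|\phi_k\|^2<0$ when $\mu_1>0$, or is bounded above by a negative multiple of $\|\phi_k\|^2$ after separating the finitely many modes with $\mu_k\le 0$.

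Collecting the estimates, setting
$$C_0=\min_{k\ge 1}\big(\mu_k^2-\beta\mu_k\big)\Big/\ \text{(a suitable normalization)}\,>0,$$
— more carefully, $C_0=\inf_{k\ge1}(\mu_k^2-\beta\mu_k)$, which is attained (it tends to $+\infty$ as $k\to\infty$ and there are finitely many candidates below any bound) and is positive by the previous paragraph — one gets
$$\beta\int_{\Sigma_r}\phi L\phi\,d\sigma_0-\int_{\Sigma_r}(L\phi)^2\,d\sigma_0=-\sum_{k\ge1}(\mu_k^2-\beta\mu_k)\|\phi_k\|^2\le -C_0\sum_{k\ge1}\|\phi_k\|^2=-C_0\int_{\Sigma_r}(\phi-\overline\phi)^2\,d\sigma_0.$$
Finally, absorbing $\frac{|\Sigma_r|^{1/2}}{32\pi^{3/2}}C_0$ into a single constant $C=C(\Sigma_r)>0$ yields \eqref{des}. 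The main obstacle I anticipate is checking the sign $\frac12-\frac{4\pi}{|\Sigma_r|}<0$ for the admissible slices — i.e. verifying that the hypothesis $r\in\{2n:n\in\mathbb N\}$ together with the geometry of $g_{Q,a}$ keeps $|\Sigma_r|<8\pi$, equivalently that these minimal slices lie in the strict-stability window of \cite{brad}; and, relatedly, handling cleanly the finitely many modes $k\ge1$ (if any) for which $\mu_k\le 0$, so that the infimum defining $C_0$ is genuinely positive rather than merely nonnegative.
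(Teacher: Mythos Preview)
Your route is genuinely different from the paper's. The paper does not diagonalize $L$: after substituting \eqref{RicH} into Proposition~\ref{2vari} it arrives at the identity
\[
\frac{d^2}{dt^2}m_{{\rm CH}}(\Sigma(t))\Big|_{t=0}=\frac{|\Sigma_r|^{1/2}}{32\pi^{3/2}}\Big(Ric(\nu_r,\nu_r)\int_{\Sigma_r}|\nabla\phi|^2\,d\sigma_r-\int_{\Sigma_r}(\Delta\phi)^2\,d\sigma_r\Big),
\]
which already annihilates constants; it then bounds the coefficient using the area--charge inequality of Corollary~\ref{inq1B} (an electrostatic-system result proved in the Appendix), controls $\int(\Delta\phi)^2$ via Bochner--Weitzenb\"ock, reads off $|\Sigma_r|<8\pi$ directly from the ODE \eqref{edo}, and finishes with Poincar\'e. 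Your spectral argument uses none of these integral tools.

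There is, however, a real gap in your handling of the zero mode. With your value $\beta=\tfrac12-\tfrac{4\pi}{|\Sigma_r|}+\tfrac{16\pi^2Q^2}{|\Sigma_r|^2}$ one gets $\beta-\mu_0=-\tfrac12$ (your formula $\mu_0(\beta-\mu_0)=\mu_0(\tfrac{4\pi}{|\Sigma_r|}-\tfrac12)$ is an arithmetic slip), so the constant-mode contribution to the bracket equals $-\tfrac{\mu_0}{2}\,|\Sigma_r|\,\overline\phi^{\,2}$, which is \emph{strictly positive} on a strictly stable slice (there $\mu_0=Ric(\nu_r,\nu_r)<0$). This cannot be bounded above by $-C\int(\phi-\overline\phi)^2=0$ when $\phi$ is constant, so \eqref{des} would fail outright; your appeal to ``$\Sigma_r$ is a critical point'' does not dispose of a nonzero second-variation term. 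The paper avoids this precisely because its coefficient coincides with $\mu_0$, so the zero mode drops out identically. Your analysis for $k\geq1$ is also tangled: from $\mu_1<\beta$ and $\mu_1>0$ one would get $\mu_1(\beta-\mu_1)>0$, the wrong sign, and the reduction to ``$\tfrac12-\tfrac{4\pi}{|\Sigma_r|}<0$'' should read $\tfrac12-\tfrac{8\pi}{|\Sigma_r|}<0$. In fact strict stability forces $\mu_k<0$ for \emph{all} $k$, and once the coefficient is identified with $\mu_0$ one has $\beta-\mu_k=\tfrac{4\pi}{|\Sigma_r|}k(k+1)>0$ for $k\geq1$, giving $\mu_k(\beta-\mu_k)<0$ with a uniform gap; so your spectral scheme can be salvaged, but only after repairing the zero-mode step that the paper's simplification handles for free.
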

\begin{proof}
Firstly, we substitute (\ref{RicH}) in Proposition~\ref{2vari}, to obtain
\begin{eqnarray*}
\frac{d^2}{dt^2}m_{{\rm CH}}(\Sigma(t))\Big|_{t=0}&=&\frac{|\Sigma_r|^{1/2}}{32\pi^{3/2}}Ric(\nu_r,\nu_r)\int_{\Sigma_r}(\phi\Delta\phi+Ric(\nu_r,\nu_r)\phi^{2})d\sigma_{r}\\
&&-\frac{|\Sigma_r|^{1/2}}{32\pi^{3/2}}\int_{\Sigma_r}(\Delta\phi+Ric(\nu_r,\nu_r)\phi)^{2}d\sigma_{r}\\
&=&\frac{|\Sigma_r|^{1/2}}{32\pi^{3/2}}\left[Ric(\nu_r,\nu_r)\int_{\Sigma_r}|\nabla\phi|^{2}d\sigma_r-\int_{\Sigma_r}(\Delta\phi)^{2}d\sigma_r\right],
\end{eqnarray*}
which can be rewritten for our purpose as
\begin{eqnarray*}
\frac{d^2}{dt^2}m_{{\rm CH}}(\Sigma(t))\Big|_{t=0}&=&-\frac{|\Sigma_r|^{1/2}}{32\pi^{3/2}}\left(\int_{\Sigma_r}(\Delta\phi)^{2}d\sigma_r-\int_{\Sigma_r}|\nabla\phi|^{2}d\sigma_r\right)\nonumber\\
&&+\frac{1}{96\pi^{3/2}|\Sigma_r|^{1/2}}\left(-12\pi+\frac{48\pi^{2}Q^{2}}{|\Sigma_r|}\right)\int_{\Sigma_r}|\nabla\phi|^{2}d\sigma_r.
\end{eqnarray*} 

On the other hand, from Corollary~\ref{inq1B} of the Appendix, the area of $\Sigma_r$ satisfies
\begin{equation}\label{BB}
|\Sigma_r|+\frac{48\pi^2Q^2}{|\Sigma_r|}\leq 12\pi.
\end{equation}

Now, we can use \eqref{BB} to get
{\setlength\arraycolsep{2pt}
\begin{eqnarray}\label{auxACI}
\frac{d^2}{dt^2}m_{{\rm CH}}(\Sigma(t))\Big|_{t=0}
&\leq&-\frac{|\Sigma_r|^{1/2}}{32\pi^{3/2}}\Big(\int_{\Sigma_r}(\Delta\phi)^2d\sigma_r-\int_{\Sigma_r}|\nabla\phi|^2d\sigma_r\Big).
\end{eqnarray}}
In the sequel, from the B${\rm \ddot{o}}$chner–Weitzenb${\rm \ddot{o}}$ck identity 
\begin{equation*}\label{bw}
-\int_{\Sigma_r}(\Delta\phi)^2d\sigma_r\leq-\frac{8\pi}{|\Sigma_r|}\int_{\Sigma_r}|\nabla\phi|^2d\sigma_r,
\end{equation*}
substituted into \eqref{auxACI} we arrive at
\begin{equation}\label{pi}
\frac{d^2}{dt^2}m_{{\rm CH}}(\Sigma(t))\Big|_{t=0}\leq -\frac{|\Sigma_r|^{1/2}}{32\pi^{3/2}}\Big(\frac{8\pi}{|\Sigma_r|}-1\Big)\int_{\Sigma_r}|\nabla\phi|^2d\sigma_r.
\end{equation}
Finally, using that \eqref{edo} we immediately conclude that $|\Sigma_r|<8\pi$ and hence, as $g_{\Sigma_r}=u(r)^2g_{\mathbb{S}^2}$, we have by Poincaré inequality
$$\frac{d^2}{dt^2}m_{{\rm CH}}(\Sigma(t))\Big|_{t=0}\leq -C(\Sigma_r)\int_{\Sigma_r}(\phi-\overline{\phi})^2d\sigma_r ,$$
where $C(\Sigma_r)$ is a positive constant. 
\end{proof}
Now we are in conditions to present the proof of Theorem \ref{gra}.
\subsection{Conclusion of the Proof of Theorem \ref{gra}}
\begin{proof}
Suppose that $\Sigma$ is a graph over a slice $\Sigma_0$
associated to a function $\varphi\in C^2(\Sigma_0)$. 
Following the same steps of \cite{mn} the operator $\mathcal{L}$ given by
{\setlength\arraycolsep{2pt}
\begin{eqnarray*}
\frac{d^2}{dt^2}m_{{\rm CH}}(\Sigma(t))\Big|_{t=0}&=&\mathcal{L}(\phi,\phi)
+\frac{2\pi^{1/2} Q^2}{|\Sigma_0|^{3/2}}\int_{\Sigma_0}|\nabla\phi|^2d\sigma_0
\end{eqnarray*}}
for $\phi\in C^2(\Sigma_0)$ with $\int_{\Sigma_0}\phi
d\sigma=0$ is strongly positive. Whence we obtain
\begin{equation}\label{mass}
m_{{\rm CH}}(\Sigma) -m_{{\rm CH}}(\Sigma_0) \geq\frac{1}{2}\langle\mathcal{L}\phi,\phi \rangle+  O(||\phi||_{C^2}||\phi||_{W^{2,2}}^2).
\end{equation}
Finally, it suffices to repeat exactly the same arguments used by Maximo and Nunes \cite{mn} in the last part of the proof of Theorem 1.2.
\end{proof}

\section{Stability result for charged Hawking mass}
The next result was motivated by Proposition 4.1 and Proposition 4.3 in \cite{mn} and describe, in a few words, an equality involving $\lambda_{1}(L)$ and the area of a two-sided strictly stable minimal surface which locally maximizes the charged Hawking mass on a three manifold endowed of a vector field $E.$ In addition, nice properties are obtained along such a surface.  
 
\begin{proposition}\label{locally}Let $(M^3, g)$ be a Riemannian manifold $E\in\mathfrak{X}(M)$ such that $div E=0$ with scalar curvature $R-2|E|^2\geq 2$. If $\Sigma\subset M$ is a two-sided compact strictly stable minimal surface
which locally maximizes the charged Hawking mass, then
\begin{equation*}
(\lambda_1(L)+1)|\Sigma|+\frac{16\pi^2Q(\Sigma)^2}{|\Sigma|}= 4\pi.
\end{equation*}
Moreover, along $\Sigma$, we have  $A = 0$, $R = 2|E|^2+2$,  $Ric(\nu, \nu) = -\lambda_1(L)$,  and its Gaussian
curvature $K_{\Sigma} = \frac{4\pi}{|\Sigma|}$
\end{proposition}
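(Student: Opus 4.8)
The plan is to establish the identity as a two-sided inequality and then read off the geometry from the equality cases. First note that, because $\Sigma$ is minimal, every term in the first variation formula of Proposition~\ref{var1} carries a factor of $H$ or $\Delta_\Sigma H$, so $\Sigma$ is automatically a critical point of $m_{\rm CH}$ and Proposition~\ref{2vari} genuinely computes the Hessian of $m_{\rm CH}$ at $\Sigma$.

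For the bound $(\lambda_1(L)+1)|\Sigma| + \tfrac{16\pi^2 Q(\Sigma)^2}{|\Sigma|} \le 4\pi$, I would insert the test function $\phi\equiv 1$ into the strict stability inequality \eqref{stability}, getting $\lambda_1(L)\,|\Sigma| \le -\int_\Sigma\big(Ric(\nu,\nu)+|A|^2\big)\,d\sigma$. The traced Gauss equation $2K_\Sigma = R - 2\,Ric(\nu,\nu) - |A|^2$ (valid since $H=0$) together with Gauss--Bonnet rewrites the right-hand side as $-\tfrac12\int_\Sigma R\,d\sigma - \tfrac12\int_\Sigma |A|^2\,d\sigma + 2\pi\chi(\Sigma)$; since the left-hand side is positive, this already forces $\chi(\Sigma)>0$, i.e. $\Sigma$ is a two-sphere and $\int_\Sigma K_\Sigma=4\pi$. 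Then dropping $-\tfrac12\int_\Sigma|A|^2\,d\sigma\le 0$, inserting $R\ge 2+2|E|^2$, and estimating $\int_\Sigma|E|^2\,d\sigma \ge \tfrac{16\pi^2 Q(\Sigma)^2}{|\Sigma|}$ by Cauchy--Schwarz applied to $4\pi Q(\Sigma)=\int_\Sigma\langle E,\nu\rangle\,d\sigma$, one lands exactly on the claimed upper bound.

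For the reverse inequality I would use that $\Sigma$ locally maximizes $m_{\rm CH}$. Let $u>0$ be a first eigenfunction of the Jacobi operator, $Lu=-\lambda_1(L)u$, and let $\Sigma(t)$ be the normal variation with $\tfrac{\partial f}{\partial t}\big|_{0}=u\nu$; for small $t$ this is a $C^2$-small normal graph over $\Sigma$, so $t\mapsto m_{\rm CH}(\Sigma(t))$ has a local maximum at $t=0$ and Proposition~\ref{2vari} gives $\tfrac{d^2}{dt^2}m_{\rm CH}(\Sigma(t))\big|_{0}\le 0$. Substituting $\phi=u$, using $\int_\Sigma\phi L\phi\,d\sigma=-\lambda_1(L)\int_\Sigma u^2\,d\sigma$ and $\int_\Sigma(L\phi)^2\,d\sigma=\lambda_1(L)^2\int_\Sigma u^2\,d\sigma$, and dividing by $\lambda_1(L)\int_\Sigma u^2\,d\sigma>0$ (here strict stability is essential), the inequality collapses to $(\lambda_1(L)+1)|\Sigma|+\tfrac{16\pi^2 Q(\Sigma)^2}{|\Sigma|}\ge 4\pi$, and together with the previous paragraph this yields the equality.

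The rigidity is then simply the equality case of the chain in the second paragraph. Equality forces: $\phi\equiv 1$ is a first eigenfunction, so $-\big(Ric(\nu,\nu)+|A|^2\big)=\lambda_1(L)$ on $\Sigma$; the pointwise inequalities are saturated, so $A\equiv 0$ (hence $Ric(\nu,\nu)\equiv-\lambda_1(L)$) and $R=2+2|E|^2$ along $\Sigma$; and Cauchy--Schwarz for $Q(\Sigma)$ is sharp, so $|E|$ is constant on $\Sigma$ and $E$ is a multiple of $\nu$ there. Putting $A=0$, $R=2+2|E|^2$ and $Ric(\nu,\nu)=-\lambda_1(L)$ back into the traced Gauss equation gives $K_\Sigma = 1+|E|^2+\lambda_1(L)$, which is constant, and integrating against $\int_\Sigma K_\Sigma=4\pi$ yields $K_\Sigma=\tfrac{4\pi}{|\Sigma|}$. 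I expect the main obstacle to lie in the lower-bound step: one must check that, with the normalization of the cosmological/energy term used in Proposition~\ref{2vari}, the simplified second-variation inequality reproduces exactly the combination $(\lambda_1(L)+1)|\Sigma|+16\pi^2Q(\Sigma)^2/|\Sigma|$ appearing in the stability estimate, so that the two bounds pinch; a secondary point is verifying that the equality conditions above are mutually consistent (they are, and they force $1+|E|^2+\lambda_1(L)=4\pi/|\Sigma|$).
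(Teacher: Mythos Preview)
Your proposal is correct and follows essentially the same two-sided pinching as the paper: the upper bound comes from inserting $\phi\equiv 1$ into the stability inequality, applying the Gauss equation, Gauss--Bonnet, $R\ge 2+2|E|^2$, and Cauchy--Schwarz on $Q(\Sigma)$, while the lower bound comes from plugging a first eigenfunction of $L$ into Proposition~\ref{2vari} (with $\Lambda=\zeta=2$, which resolves your normalization worry) and using $\lambda_1(L)>0$. Your write-up is in fact more complete than the paper's, since you explicitly verify that $\Sigma$ is a sphere and derive the equality-case conclusions $A\equiv 0$, $R=2+2|E|^2$, $Ric(\nu,\nu)=-\lambda_1(L)$, $K_\Sigma=4\pi/|\Sigma|$, which the paper asserts but does not spell out.
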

\begin{proof}
Applying Proposition \ref{2vari} joint with our assumption that $\Sigma$ locally maximizes the charged Hawking mass, we obtain
$$\lambda_1(L)^2\geq\Big(\frac{4\pi}{|\Sigma|}-\frac{16\pi^2Q(\Sigma)^2}{|\Sigma|^2}-1\Big)\lambda_1(L),$$
i.e.,
\begin{equation}
(\lambda_1(L)+1)|\Sigma|+\frac{16\pi^2Q(\Sigma)^2}{|\Sigma|}\geq 4\pi.
\end{equation}

On the other hand, choosing the function $\phi=1$  in the stability inequality \eqref{stability} and the Gauss equation 
$$R-2Ric(\nu,\nu) = 2K_{\Sigma} + |A|^2,$$
we conclude
\begin{equation}\label{auxinqR}
\lambda_1(L)|\Sigma| +\frac{1}{2}\int_{\Sigma}(R+|A|^2)d\sigma
\leq\int_{\Sigma}K_{\Sigma}d\sigma.
\end{equation}
Furthermore, since  $R-2|E|^2\geq 2$, we can use (\ref{auxinqR}) to get
\begin{eqnarray}\label{auxThm3}
(\lambda_1(L)+1)|\Sigma|&\leq& 4\pi-\int_{\Sigma}|E|^2d\sigma\nonumber\\
&\leq& 4\pi-\frac{16\pi^2Q(\Sigma)^2}{|\Sigma|},
\end{eqnarray}
where in the last inequality we have used H${\rm \ddot{o}}$lder inequality to deduce the reverse inequality.
Whence it follows the following equality
\begin{equation*}
(\lambda_1(L)+1)|\Sigma|+\frac{16\pi^2Q(\Sigma)^2}{|\Sigma|}= 4\pi.
\end{equation*}
\end{proof}

The next result was previously done in \cite{mn} and the authors recommend that reference for its prove. Such a result describe the construction of an one-parameter family $\Sigma(t)$ of the neighborhood of $\Sigma$ such that $\Sigma(t)$ is compact and has constant mean curvature.
\begin{proposition}\label{cmc} Let $(M^3, g)$ be a Riemannian manifold, $E\in\mathfrak{X}(M)$ such that $div E=0$ and scalar curvature satisfying
$R-2|E|^2\geq 2$. If $\Sigma\subset M$ is a stable compact embedded minimal surface satisfying
\begin{equation*}
(\lambda_1(L)+1)|\Sigma|+\frac{16\pi^2Q(\Sigma)^2}{|\Sigma|}= 4\pi.
\end{equation*}
then there exist $\epsilon > 0$ and a smooth function $\mu : \Sigma\times (-\epsilon,\epsilon)\rightarrow\mathbb{R}$ such that
$$\Sigma(t) = \{\exp_x(\mu(t, x)\nu(x)); x \in\Sigma\}$$
is a family of compact surfaces with constant mean curvature. Moreover, the
following properties hold
$$\mu(0, x) = 0,\ \ \ \frac{\partial\mu}{\partial t} (0, x) = 1\ \ \  and\ \ \ \int_{\Sigma}(\mu(t, \cdot)-t)d\sigma = 0$$
for each $x \in\Sigma$ and for each $t \in (-\epsilon, \epsilon)$.
\end{proposition}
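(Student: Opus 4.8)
Proposition~\ref{cmc} — given a stable compact embedded minimal $\Sigma$ with the equality $(\lambda_1(L)+1)|\Sigma|+\frac{16\pi^2Q(\Sigma)^2}{|\Sigma|}=4\pi$, there exists a CMC foliation $\Sigma(t)$ near $\Sigma$ with $\mu(0,\cdot)=0$, $\partial_t\mu(0,\cdot)=1$, and $\int_\Sigma(\mu(t,\cdot)-t)\,d\sigma=0$.

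The plan is to set this up as an implicit function theorem argument exactly as in the classical construction used by Maximo–Nunes (and originating with the work on CMC foliations near minimal surfaces). First I would consider the map that sends a normal graph $\exp_x(w(x)\nu(x))$, for $w$ in a neighborhood of $0$ in $C^{2,\alpha}(\Sigma)$, to its mean curvature $H(w)\in C^{0,\alpha}(\Sigma)$. Since $\Sigma$ is minimal, $H(0)=0$, and the linearization $DH_0 = -L = -(\Delta_\Sigma + \mathrm{Ric}(\nu,\nu)+|A|^2)$ is, up to sign, the Jacobi operator. The equality hypothesis together with Proposition~\ref{locally} forces $A=0$ and $\mathrm{Ric}(\nu,\nu)=-\lambda_1(L)$ along $\Sigma$, so in fact $L=\Delta_\Sigma-\lambda_1(L)$ acting on functions; since $\Sigma$ is strictly stable, $\lambda_1(L)>0$ (or $\geq 0$ in the stable case — and one must be slightly careful here), so $L$ is invertible on the orthogonal complement of constants but has a one-dimensional "almost-kernel" coming from the constant functions paired with $\lambda_1(L)$.

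The trick, to get a CMC family rather than just a single CMC surface, is to introduce the extra real parameter: define $G:C^{2,\alpha}(\Sigma)\times\mathbb{R}\times\mathbb{R}\to C^{0,\alpha}(\Sigma)$ by
\[
G(w,t,\lambda)=H\big(t+w\big)-\lambda,
\]
restricted to the slice $\int_\Sigma w\,d\sigma=0$, and solve $G=0$ for $(w,\lambda)$ as functions of $t$. At $t=0$, $w=0$, $\lambda=0$ one checks $G=0$; the partial derivative of $G$ in $(w,\lambda)$ at this point is $(v,\eta)\mapsto L v - \eta$ on the space $\{v:\int_\Sigma v=0\}\times\mathbb{R}$, and this is an isomorphism onto $C^{0,\alpha}(\Sigma)$ because $L$ restricted to mean-zero functions is invertible (its kernel there is trivial by strict stability / by $\lambda_1$ being simple with constant eigenfunction) and the $-\eta$ term supplies the missing constant direction. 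The implicit function theorem then yields smooth $w(t),\lambda(t)$ with $w(0)=0$; setting $\mu(t,x)=t+w(t,x)$ gives the family, $\int_\Sigma(\mu(t,\cdot)-t)\,d\sigma=\int_\Sigma w(t,\cdot)\,d\sigma=0$ is built in, and differentiating $w$ at $t=0$ (using $w(0)=0$ and the linearized equation) gives $\partial_t w(0,\cdot)=0$, hence $\partial_t\mu(0,\cdot)=1$. Regularity of $\Sigma(t)$ and openness of the immersion condition handle smallness of $\epsilon$.

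The main obstacle — and the reason the statement says the proof is in \cite{mn} — is the verification that the relevant linearized operator is an isomorphism, which hinges precisely on the rigidity consequences $A=0$, $\mathrm{Ric}(\nu,\nu)=-\lambda_1(L)$ from Proposition~\ref{locally}: these guarantee that the Jacobi operator is genuinely $\Delta_\Sigma-\lambda_1(L)$ with the constants as (sub)eigenfunctions, so that adding the single scalar parameter $\lambda$ exactly compensates the cokernel. A secondary technical point is the bookkeeping of function spaces (Schauder versus Sobolev) and checking that $\exp_x(\mu(t,x)\nu(x))$ remains an embedding for small $t$, which follows from the $C^1$-smallness of $\mu$ and compactness of $\Sigma$. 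Since all of this is carried out verbatim in \cite[Proposition 4.2 and its proof]{mn}, I would simply reproduce their argument with $L=\Delta_\Sigma-\lambda_1(L)$ in place of their operator and refer the reader there for the full details.
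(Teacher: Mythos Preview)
Your proposal is correct and follows precisely the implicit function theorem construction of \cite{mn}, to which the paper itself defers without giving its own proof. One small remark: you invoke Proposition~\ref{locally} to obtain $A=0$ and $\mathrm{Ric}(\nu,\nu)=-\lambda_1(L)$, but that proposition has different hypotheses (strict stability plus local maximization of $m_{\mathrm{CH}}$); what you actually need is that the assumed \emph{equality} $(\lambda_1(L)+1)|\Sigma|+16\pi^2Q(\Sigma)^2/|\Sigma|=4\pi$ forces equality throughout the stability-based chain in the proof of Proposition~\ref{locally}, which then yields these rigidity consequences---this is immediate from inspecting that proof, so the gap is purely one of citation rather than substance.
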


With the aid of this proposition we define a mapping $f_t : \Sigma\rightarrow M$ by $f_t(x)=\exp_x(\mu(x, t)\nu(x))$ for each $t \in (-\epsilon, \epsilon)$. Note that $\Sigma_0=\Sigma$ for each $x\in\Sigma$. Let $\nu_t(x)$
be the unit vector field normal along $\Sigma(t)$ such that $\nu_0(x) = \nu(x)$ for all $x\in\Sigma$ and let us
denote $d\sigma_t$ the element of volume of $\Sigma(t)$ with respect to the induced metric by $f_t$.

Next, we consider the Jacobi operator 
$$L(t)=\Delta_{\Sigma(t)}+Ric(\nu_t,\nu_t)+|A_t|^2$$ 
where $\Delta_{\Sigma(t)}$ denotes the Laplacian of $\Sigma(t)$ in the induced metric and $A_t$ is the second fundamental form of $\Sigma(t)$ with respect to $\nu_t$.

Let $H(t)$ denote the mean curvature of $\Sigma(t)$ with respect to $\nu_t$ as well as the lapse function $\rho_t:\Sigma(t)\rightarrow\mathbb{R}$ which is defined by
$$\rho_t(x) =\Big\langle\nu_t(x),\frac{\partial}{\partial t}f_t(x)\Big\rangle
,$$
for each $t\in(-\epsilon,\epsilon)$.

Following the above terminology we now state  the next lemma which corresponds to Theorem 3.2 of \cite{Huisken}.
\begin{lemma}
The function $\rho_t(x)$ satisfies $H'(t) = L(t)\rho_t$.
\end{lemma}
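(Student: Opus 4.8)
The plan is to derive the identity $H'(t)=L(t)\rho_t$ by differentiating, in $t$, the first-variation formula for the mean curvature. Recall that for a variation of a hypersurface with velocity vector field $V=\frac{\partial}{\partial t}f_t$, the normal component is exactly the lapse function $\rho_t=\langle\nu_t,V\rangle$, while the tangential component, call it $V^{\top}$, generates only a reparametrization of $\Sigma(t)$. First I would decompose $V=\rho_t\,\nu_t+V^{\top}$ and note that the tangential part contributes $\langle\nabla^{\Sigma(t)}H(t),V^{\top}\rangle$ to $H'(t)$; since we only need the statement up to the understanding already used in the paper (the construction in Proposition \ref{cmc} forces $\Sigma(t)$ to have constant mean curvature, so $H(t)$ is a constant on each leaf and its tangential derivative vanishes), the reparametrization term drops out and it suffices to compute the purely normal variation.

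Next I would invoke the classical first-variation formula for mean curvature under a normal variation with speed $\rho$, namely
\begin{equation*}
\frac{\partial}{\partial t}H(t)=\Delta_{\Sigma(t)}\rho_t+\bigl(Ric_M(\nu_t,\nu_t)+|A_t|^2\bigr)\rho_t,
\end{equation*}
which is precisely $L(t)\rho_t$ with $L(t)=\Delta_{\Sigma(t)}+Ric(\nu_t,\nu_t)+|A_t|^2$ as defined just above. This is Theorem 3.2 of \cite{Huisken}, so the honest thing to do is to cite that reference for the derivation and simply record how the terms match our notation: the operator appearing there is the Jacobi (stability) operator $L(t)$, and the function being acted on is the lapse $\rho_t$, which agrees with $\partial\mu/\partial t$ normalized against $\nu_t$. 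One should also double-check the sign convention for $H$ and for $A$ used throughout the present paper (the mean curvature is the trace of $A$, not its average, in the convention fixed in Section~\ref{sectionInt}) so that the formula is stated with the correct sign; with the conventions adopted here the formula holds as written.

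Finally I would observe the special feature of our setting that makes the statement clean: because $\mu(t,x)$ is the graph function produced in Proposition \ref{cmc} with $\partial_t\mu(0,x)=1$, the vector $\frac{\partial}{\partial t}f_t(x)$ is transverse to $\Sigma(t)$ for small $t$ and $\rho_t$ is strictly positive near $t=0$; this is what later allows $\rho_t$ to be used as a positive test function for the stability operator. The main (and really only) obstacle is bookkeeping: matching the sign and normalization conventions of \cite{Huisken} to those fixed in this paper, and justifying that the tangential part of the variation vector field does not contribute — which, as noted, follows from the fact that each $\Sigma(t)$ has constant mean curvature by construction. Once these points are settled the identity $H'(t)=L(t)\rho_t$ is immediate from the cited first-variation formula.
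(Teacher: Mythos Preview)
Your proposal is correct and follows the same approach as the paper: the paper does not actually prove this lemma but simply records it as ``corresponding to Theorem 3.2 of \cite{Huisken}'', and you likewise cite that reference while spelling out how the normal--tangential decomposition, the CMC condition from Proposition~\ref{cmc}, and the sign conventions line up. Your treatment is, if anything, more careful than the paper's own one-line citation.
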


As a consequence of Proposition \ref{cmc} we have that $\rho_0 = 1$. So, we may apply Proposition \ref{locally} and the last lemma to conclude that for $t\in(-\epsilon, \epsilon)$ the mean curvature $H(t)$ of this CMC foliation $\Sigma(t)\subset M$ in a neighborhood of $\Sigma$ satisfies
\begin{equation}\label{H'}
\frac{d}{dt}H(t)\Big|_{t=0}=-\lambda_1(L)<0.
\end{equation}
Thus we can decreasing $\epsilon$ if necessary, to conclude that $H(t) < 0$ for $t\in (0, \epsilon)$ and $H(t) > 0$ for $t\in (-\epsilon, 0)$, in other words, $H$ is a decreasing function on $t$.

In order to proceed it is crucial to recall the next lemma obtained  in [\cite{mn}, Lemma 5.3], that will be used in order to
deduce the monotonicity of the charged Hawking mass along the foliation $\Sigma(t)$.

\begin{lemma}\label{auxineq} 
Consider $(M,g)$, $\Sigma$ and $\{\Sigma(t)\}_{t\in(-\epsilon,\epsilon)}$ as in Proposition \ref{cmc}, then
{\setlength\arraycolsep{2pt}
\begin{eqnarray*}
\int_{\Sigma(t)}(Ric(\nu_t,\nu_t)+|A_t|^2)(\rho_t-\overline{\rho_t})d\sigma_t &\geq&\frac{\lambda_{1}(L_{\Sigma})}{\overline{\rho_t}}\int_{\Sigma(t)}(\rho_t-\overline{\rho_t})^{2}d\sigma_t,
\end{eqnarray*}}
where $\overline{\rho_t} =\frac{1}{|\Sigma(t)|}\int_{\Sigma(t)}\rho_td\sigma_t$.
\end{lemma}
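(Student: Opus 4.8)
The statement I want to prove is Lemma~\ref{auxineq}, which asserts a reverse Poincar\'e-type estimate for the operator $Ric(\nu_t,\nu_t)+|A_t|^2$ weighted against the mean-zero part of the lapse $\rho_t$. My plan is to work $t$ by $t$ on the fixed surface $\Sigma(t)$, using that $\Sigma(t)$ has constant mean curvature (Proposition~\ref{cmc}), so that the decomposition $\rho_t=\overline{\rho_t}+(\rho_t-\overline{\rho_t})$ is orthogonal in $L^2(\Sigma(t))$ and, crucially, $\rho_t-\overline{\rho_t}$ is $L^2$-orthogonal to constants. The first step is to recall from the CMC foliation that $H'(t)=L(t)\rho_t$ with $H'(t)$ a constant on $\Sigma(t)$; this gives a useful identity after integrating against $\rho_t-\overline{\rho_t}$, namely $\int_{\Sigma(t)}(\Delta_{\Sigma(t)}\rho_t)(\rho_t-\overline{\rho_t})d\sigma_t+\int_{\Sigma(t)}(Ric(\nu_t,\nu_t)+|A_t|^2)\rho_t(\rho_t-\overline{\rho_t})d\sigma_t=0$, since $H'(t)$ is constant and $\rho_t-\overline{\rho_t}$ integrates to zero.

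\textbf{Key steps.} Next I would integrate by parts in the Laplacian term, $\int(\Delta\rho_t)(\rho_t-\overline{\rho_t})=-\int|\nabla\rho_t|^2$ (the constant $\overline{\rho_t}$ drops out of the gradient), and replace the potential term $\int(Ric+|A_t|^2)\rho_t(\rho_t-\overline{\rho_t})$ by $\int(Ric+|A_t|^2)(\rho_t-\overline{\rho_t})^2+\overline{\rho_t}\int(Ric+|A_t|^2)(\rho_t-\overline{\rho_t})$. Rearranging, the quantity $\int_{\Sigma(t)}(Ric(\nu_t,\nu_t)+|A_t|^2)(\rho_t-\overline{\rho_t})d\sigma_t$ that appears on the left of the claimed inequality equals $\frac{1}{\overline{\rho_t}}\big(\int|\nabla\rho_t|^2-\int(Ric+|A_t|^2)(\rho_t-\overline{\rho_t})^2\big)$. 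So the lemma reduces to the inequality
\begin{equation*}
\int_{\Sigma(t)}|\nabla\rho_t|^2d\sigma_t-\int_{\Sigma(t)}(Ric(\nu_t,\nu_t)+|A_t|^2)(\rho_t-\overline{\rho_t})^2d\sigma_t\geq\lambda_1(L_\Sigma)\int_{\Sigma(t)}(\rho_t-\overline{\rho_t})^2d\sigma_t.
\end{equation*}
The left-hand side is precisely the quadratic form $J_{\Sigma(t)}$ associated to the Jacobi operator $L(t)$ evaluated on the test function $\rho_t-\overline{\rho_t}$ (since $\nabla(\rho_t-\overline{\rho_t})=\nabla\rho_t$). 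Thus the content is a lower bound $J_{\Sigma(t)}(\psi)\geq\lambda_1(L_\Sigma)\int\psi^2$ for $\psi=\rho_t-\overline{\rho_t}$, where the eigenvalue constant is that of the \emph{original} surface $\Sigma=\Sigma(0)$, not $\Sigma(t)$.

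\textbf{Main obstacle.} The delicate point — and the step I expect to be the crux — is transferring the variational characterization of $\lambda_1(L_\Sigma)$ on $\Sigma$ to the nearby surface $\Sigma(t)$. On $\Sigma$ itself one has $J_\Sigma(\psi)\geq\lambda_1(L_\Sigma)\int_\Sigma\psi^2$ for all $\psi$, so on $\Sigma(t)$ the same bound should hold up to an error that is $o(1)$ as $t\to 0$, and one must absorb this error either by shrinking $\epsilon$ or (as is done in \cite{mn}) by a more careful argument exploiting strict stability and the fact that $\rho_t\to 1$ in $C^\infty$, so $\rho_t-\overline{\rho_t}\to 0$. Since this lemma is quoted verbatim from \cite{mn}, the honest way to handle it in this paper is to note that all the structural inputs — $\Sigma(t)$ CMC, $H'(t)=L(t)\rho_t$, strict stability of $\Sigma$, and the smooth convergence $\rho_t\to 1$ — are identical in the charged setting (the electric field $E$ enters only through the scalar curvature bound and the charge, neither of which affects the Jacobi operator or the foliation construction), so the proof of \cite[Lemma~5.3]{mn} applies without change; one records the reduction above and then cites that reference for the remaining eigenvalue-perturbation estimate.
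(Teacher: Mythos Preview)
Your proposal is correct and matches the paper's approach: the paper gives no proof of this lemma at all, simply recording that it is ``obtained in [\cite{mn}, Lemma~5.3]'' and that the argument carries over verbatim, which is exactly what you conclude after your (accurate) reduction to the Jacobi quadratic-form inequality $J_{\Sigma(t)}(\rho_t-\overline{\rho_t})\geq\lambda_1(L_\Sigma)\int(\rho_t-\overline{\rho_t})^2$. Your identification of the structural inputs---CMC foliation, $H'(t)=L(t)\rho_t$, strict stability, $\rho_t\to 1$ smoothly---and your observation that the electric field enters only through the scalar-curvature bound and the charge (hence not through the Jacobi operator) are precisely why the citation is legitimate; you have in fact supplied more justification than the paper does.
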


\subsection{Proof of Theorem \ref{teoprincipal}}
\begin{proof}
Firstly, let $(M^3, g)$ be a Riemannian three-manifold that contains a two-sphere $\Sigma$ satisfying our assumptions. It follows from Propostion \ref{ld} that the Jacobi operator is given by $L=\Delta-\lambda_1(L)$, and from the Proposition \ref{cmc}, we can construct a constant mean curvature
foliation by two-spheres $\{\Sigma(t)\}_{|t|<\epsilon}$ around $\Sigma=\Sigma_0$. Using this special variation we can deduce a nice expression for $\frac{d}{dt}m_{{\rm CH}}(\Sigma(t))$. More precisely, since the mean curvature  $H_{t} $ is constant, we may use (\ref{cmh}) to infer  
\begin{eqnarray*}
\frac{d}{dt}m_{{\rm CH}}(\Sigma(t))&=&-H_{t}\overline{\rho}_{t}|\Sigma_{t}|\left[\frac{1}{8\pi^{1/2}|\Sigma_{t}|^{1/2}}-\frac{3H_{t}^{2}|\Sigma_{t}|^{1/2}}{128\pi^{3/2}}-\frac{\Lambda|\Sigma_{t}|^{1/2}}{4^{3}\pi^{3/2}}-\frac{\pi^{1/2}Q(\Sigma_{t})^{2}}{2|\Sigma_{t}|^{3/2}}\right]\\
&&-\frac{2H_{t}|\Sigma_{t}|^{1/2}}{64\pi^{3/2}}\int_{\Sigma_{t}}(Ric(\nu_{t},\nu_{t})+|A_{t}|^{2})\rho_{t} d\sigma_{t}\\
&=&-\frac{H_{t}\overline{\rho}_{t}|\Sigma_{t}|^{1/2}}{32\pi^{3/2}}\left[4\pi-\frac{3|\Sigma_{t}|H_{t}^{2}}{4}-\frac{\Lambda|\Sigma_{t}|}{2}-\frac{16\pi^{2}Q(\Sigma_{t})^{2}}{|\Sigma_{t}|}\right]\\
&&-\frac{H_{t}\overline{\rho}_{t}|\Sigma_{t}|^{1/2}}{32\pi^{3/2}}\int_{\Sigma_{t}}(Ric(\nu_{t},\nu_{t})+|A|^{2})d\sigma_{t}\\
&&-\frac{H_{t}|\Sigma_{t}|^{1/2}}{32\pi^{3/2}}\int_{\Sigma_{t}}(Ric(\nu_{t},\nu_{t})+|A|^{2})(\rho_{t}-\overline{\rho}_{t})d\sigma_{t}.
\end{eqnarray*}
Furthermore, this last expression can be rewritten using the Gauss equation as 
\begin{eqnarray*}
\frac{d}{dt}m_{{\rm CH}}(\Sigma(t))&=&-\frac{H_{t}\overline{\rho}_{t}|\Sigma_{t}|^{1/2}}{64\pi^{3/2}}\left[ \int_{\Sigma_{t}}(R-\Lambda-2|E|^{2})d\sigma_{t}+\int_{\Sigma_{t}}\left(|A_{t}|^{2}-\frac{1}{2}H_{t}^{2}\right)d\sigma_{t}\right]\\
&&-\frac{H_{t}\overline{\rho}_{t}|\Sigma_{t}|^{1/2}}{32\pi^{3/2}}\left[\int_{\Sigma_{t}}|E|^{2}d\sigma_{t}-\frac{16\pi^{2}Q(\Sigma_{t})^{2}}{|\Sigma_{t}|}\right]\\
&&-\frac{H_{t}|\Sigma_{t}|^{1/2}}{32\pi^{3/2}}\int_{\Sigma_{t}}(Ric(\nu_{t},\nu_{t})+|A_{t}|^{2})(\rho_{t}-\overline{\rho}_{t})d\sigma_{t}.
\end{eqnarray*}

This last identity together with Lemma~\ref{auxineq} and \eqref{H'}, where we have the function $H(t)$ decreasing, allow us to conclude that $\frac{d}{dt}m_{{\rm CH}}(\Sigma(t))\geq 0$ for $t\in [0,\epsilon )$ and $\frac{d}{dt}m_{{\rm CH}}(\Sigma(t))\leq 0$  for $t\in (-\epsilon,0]$. Therefore, we obtain
\begin{equation*}
m_{{\rm CH}}(\Sigma(t))\geq m_{{\rm CH}}(\Sigma),
\end{equation*}
for $t\in(-\epsilon,\epsilon)$. At the same time, $\Sigma$ locally maximizes the charge Hawking mass, i.e., $m_{{\rm CH}}(\Sigma(t))\leq m_{{\rm CH}}(\Sigma)$. Whence, it follows that $\frac{d}{dt}m_{{\rm CH}}(\Sigma(t))\equiv 0$ for $t\in(-\epsilon,\epsilon)$. Consequently, we have that $\Sigma(t)$ is umbilic, $R-2|E|^2 = 2$, $|E|$ is constant along $\Sigma(t)$ and $\rho_{t}\equiv\overline{\rho_{t}}$. In addition, as $\rho_{t}\equiv\overline{\rho_{t}}$, it is not difficult to see that $\mu(t,x)=t,\;\forall\;(t,x)\in(-\delta,\delta)\times\Sigma.$
Therefore, up to isometric, there is a small neighbourhood of $\Sigma$ such that the metric can be written as $g = dt^2 + g_{\Sigma_t}.$

Now, we are in position to apply \cite{Huisken} to conclude that the induced metric on $\Sigma(t)$ evolves as
$$\frac{\partial}{\partial t}g_{\Sigma(t)}=-2\rho_tA_t.$$
Since $\rho_t\equiv 1$, $\Sigma(t)$ is umbilic and $H(t)$ is constant we obtain
\begin{equation}
\frac{\partial}{\partial t}g_{\Sigma(t)}=-H(t)g_{\Sigma(t)},
\end{equation}
for all $t\in(-\epsilon,\epsilon)$.

From this, it follows that $g_{\Sigma(t)} = u_{Q,a}(t)^2g_{\mathbb{S}^2}$ for all $t\in(-\epsilon,\epsilon)$, where $u_{Q,a}(t) = ae^{-\frac{1}{2}\int_0^tH(s)ds}$ with $a^2 =\frac{|\Sigma|}{4\pi}$ and $Q=Q(\Sigma)=a^2|E|$.

It is not difficult to show that the function $u(t)$ solves the equation
\begin{equation}\label{edo1}
u''(t)=\frac{1}{2}\Big(\frac{1-u'(t)^2}{u(t)}\Big)-\frac{1}{2}\Big(\frac{u(t)^4+Q^2}{u(t)^3}\Big).
\end{equation}

Finally, by uniqueness of solutions of the ODE, it follows that $((-\epsilon,\epsilon)\times\Sigma,g) $ is isometric to a piece of the Reissner-Nordstr${\rm \ddot{o}}$m-de Sitter space with positive mass and we can conclude that the induced metric by $f(t, x)=\exp_x(t\nu(x))$ on $(-\epsilon,\epsilon)\times\Sigma$ is given by $dt^2 + u_{Q,a}(t)^2g_{\mathbb{S}^2}$.
\end{proof}

\section{Proof of Theorem~\ref{teoprincipalN}}
\subsection{Proof of Theorem \ref{teoprincipalN}}
\begin{proof}
The computation to obtain inequality (\ref{inqN}) was done previously in Proposition~\ref{locally}. In fact, considering the stable case, inequalities (\ref{auxinqR}) and (\ref{auxThm3}) will still be true, now without $\lambda_{1}(L),$ so we have immediately the desire inequality and, if equality occurs in (\ref{inqN}), then the following relations holds on $\Sigma$: 
$$A\equiv0,\;R=2+2|E|^{2}, K_{\Sigma}=1+|E|^{2}\; \text{and}\;Ric(\nu,\nu)=0,$$
where $|E|$ is constant. As a consequence of those remarks, we may use proposition~\ref{cmc} in order to deduce some properties on $\Sigma(t),$ which is the family of compact surfaces with constant mean curvature in a neighbourhood  of $\Sigma.$ To begin, note that $H'(t)=-\Delta_{t}\rho_{t}-(Ric(\nu_{t},\nu_{t})+|A_{t}|^{2})\rho_{t}.$ Because $\rho_{0}(x)=1$ we have from Gauss equation 
\begin{eqnarray*}
H'(t)\frac{1}{\rho_{t}}&=&-\frac{1}{\rho_{t}}\Delta_{t}\rho_{t}-\left(\frac{R_{t}}{2}-K_{\Sigma_{t}}+\frac{H_{t}^{2}}{2}-\frac{|A_{t}|^{2}}{2}\right)\\
&\leq& -\frac{1}{\rho_{t}}\Delta_{t}\rho_{t}-(|E_{t}|^{2}+1)+K_{\Sigma_{t}}.
\end{eqnarray*}
On integrating the last relation we obtain
\begin{eqnarray*}
H'(t)\int_{\Sigma_{t}}\frac{1}{\rho_{t}}d\sigma_{t}&\leq& -\int_{\Sigma_{t}}\frac{1}{\rho_{t}^{2}}|\nabla_{t}\rho_{t}|^{2}d\sigma_{t}-\int_{\Sigma_{t}}|E_{t}|^{2}d\sigma_{t}-|\Sigma_{t}|+4\pi\\
&\leq&-16\pi^{2}\frac{Q(\Sigma_{t})^{2}}{|\Sigma_{t}|}-|\Sigma_{t}|+4\pi.
\end{eqnarray*}
Now, remember that ${\rm div}(E)=0$ as well as $Q(\Sigma_{t})=Q(\Sigma_{0})=Q_{0},$ $\forall\; t\in(-\epsilon,\epsilon).$ This jointly with equality in (\ref{inqN}) yields
\begin{eqnarray*}
H'(t)
\int_{\Sigma_{t}}\dfrac{1}{\rho_{t}}d\sigma_{t}
&\leq &-16\pi^{2}\dfrac{Q(\Sigma_{0})^{2}}{|\Sigma_{t}|}-|\Sigma_{t}|+16\pi^{2}\dfrac{Q(\Sigma_{0})^{2}}{|\Sigma_{0}|}+|\Sigma_{0}|\\
&\leq &16\pi^{2}Q_{0}^{2}\Big(\dfrac{1}{|\Sigma_{0}|}-\dfrac{1}{|\Sigma_{t}|}\Big),
\end{eqnarray*}
since $|\Sigma_{0}|\leq|\Sigma_{t}|,\forall\, t\in(-\varepsilon,\varepsilon).$ 
Therefore we infer 
\begin{eqnarray*}
|\Sigma_{t}| H'(t)\int_{\Sigma_{t}}\dfrac{1}{\rho_{t}}d\sigma_{t}
&\leq &16\pi^{2}Q_{0}^{2}\dfrac{1}{|\Sigma_{0}|}\Big(|\Sigma_{t}|-|\Sigma_{0}|\Big)\\
&=&16\pi^{2}Q_{0}^{2}\dfrac{1}{|\Sigma_{0}|}\int_{0}^{t}\frac{d}{ds}|\Sigma(s)|ds\\
&=&16\pi^{2}Q_{0}^{2}\dfrac{1}{|\Sigma_{0}|}\int_{0}^{t}\left(\int_{\Sigma_{s}}H(s)\rho_{s}d\sigma_{s}\right)ds,
\end{eqnarray*}
since $H(s)$ is constant on $\Sigma_{s}$ we obtain
\begin{eqnarray}\label{hprime}
|\Sigma_{t}| H'(t)\int_{\Sigma_{t}}\dfrac{1}{\rho_{t}}d\sigma_{t}&\leq &16\pi^{2}Q_{0}^{2}\dfrac{1}{|\Sigma_{0}|}\int_{0}^{t}\left(H(s)\int_{\Sigma_{s}}\rho_{s}d\sigma_{s}\right)ds.
\end{eqnarray}

In order to simplify some computations we set $\phi(t)=\int_{\Sigma_{t}}\frac{1}{\rho_{t}}d\sigma_{t},$ $\xi=\int_{\Sigma_{t}}\rho_{t}d\sigma_{t}$ and $\kappa=16\pi^{2}Q_{0}^{2}\frac{1}{|\Sigma_{0}|}$. Then, we can write the last inequality as 
$$|\Sigma_{t}|H'(t)\phi(t)\leq\kappa\int_{0}^{t}H(s)\xi(s)ds.$$

On the other hand, taking into account that $\rho_{0}=1,$ we can assume $$\frac{1}{2}<\rho_{t}(x)<2,$$ 
$\forall\;t\in(-\epsilon,\epsilon)$. Whence we derive that $\frac{1}{2}|\Sigma_{t}|<\xi(t)<2|\Sigma_{t}|,$ $\forall\;t\in(-\epsilon,\epsilon)$. To proceed, shrinking $\epsilon$ if necessary, we can suppose $\frac{1}{2}|\Sigma_{0}|<|\Sigma_{t}|<2|\Sigma_{0}|$, which implies that 
$$\frac{1}{4}|\Sigma_{0}|<\xi(t)<4|\Sigma_{0}|.$$
Similarly, it is easy to check that  $\frac{1}{4}|\Sigma_{0}|<\phi(t)<4|\Sigma_{0}|.$ Whence we deduce
\begin{equation}\label{auxEst}
\frac{1}{\phi(t)}<\frac{4}{|\Sigma_{0}|}\;\;\;\text{and}\;\;\;\xi<4|\Sigma_{0}|.
\end{equation}

\textbf{Claim 1:} $\exists\; \delta<\epsilon$ such that $H(t)\leq0,$ $\forall\;t\in[0,\delta).$

In fact, taking an arbitrary $\delta<\epsilon,$ suppose by contradiction that there exists $t_{0}\in[0,\delta)$ such that $H(t_{0})>0.$ At the same time, we consider the set $I=\{t\in[0,t_{0}];\;H(t)\geq H(t_{0})\}$ and denote, for simplicity, its infimum by $t^{\ast}=inf I.$ Notice that if we have $t^{\ast}>0,$ then by the Mean Value Theorem, there exists $t_{1}\in(0,t^{\ast})$ such that $H(t^{\ast})=H'(t_{1})t^{\ast}$ (it is important to remember that $H(0)=0$) and consequently, we may use (\ref{hprime}) and (\ref{auxEst}) to deduce the following estimate
\begin{eqnarray*}
H(t^{\ast})&\leq&\frac{t^{\ast}\kappa}{|\Sigma_{t_{1}}|\phi(t_{1})}\int_{0}^{t_{1}}H(s)\xi(s)ds\\
&<&\frac{8\kappa t^{\ast} H(t^{\ast})}{|\Sigma_{0}|^{2}}\int_{0}^{t_{1}}\xi(s)ds\\
&<&\frac{32\kappa}{|\Sigma_{0}|} (t^{\ast})^{2} H(t^{\ast}),
\end{eqnarray*}
which is a contradiction if we take $\delta<\sqrt{\frac{|\Sigma_{0}|}{32\kappa}}.$ Hence, we obtain $infI=0.$ Finally, since we already know that $infI=0$, we get
$$0=H(0)\geq H(t_{0})>0,$$ 
providing another contradiction. Whence, our claim is true.
 
Thus, it is immediate to deduce $\frac{d}{dt}|\Sigma_{t}|\leq0,$ which implies that $|\Sigma_{t}|\leq|\Sigma_{0}|,\;\forall\;t\in[0,\delta).$ Similarly, we have $|\Sigma_{t}|\leq0,\;\forall\;t\in(-\delta,0].$ Since $\Sigma$ is area minimizing, we obtain $|\Sigma_{t}|=|\Sigma_{0}|,$ $\forall\;t\in(-\delta,\delta).$ In addition, each level $\Sigma_{t}$ is totally geodesic, $|E_{t}|$ is constant,  $K_{\Sigma_{t}}=|E_{t}|^{2}+1,$ $R=2|E_{t}|^{2}+2$ and $Ric(\nu_{t},\nu_{t})=0.$  

Now, it follows from  standard argument that there exists a neighbourhood $U\subset M^{3}$ of $\Sigma$ such that $(U,g)$ is isometric to $((-\delta,\delta)\times\Sigma,dt^{2}+g_{\Sigma}),$ which finishes the proof of the theorem.
\end{proof}

\section{Appendix}
In this appendix we will obtain an area-charge inequality which was motivated by Cruz, Lima e de Sousa in \cite{brad}, see section 7 of \cite{brad} for more details.

For what follows, consider a Riemannian 3-manifold $(M^3,g)$,  $ E \in\mathfrak{X}(M)$ and $V\in C^{\infty}(M)$ such that $V > 0$. The Einstein-Maxwell equations with cosmological constant $\Lambda$ for the electrostatic space-time associated to $(M, g, V, E)$ is the following system of equations

\begin{equation}\label{elst}
\left\{
\begin{array}{lcl}
Hess_g V& =& V (Ric_g -\Lambda g+2E^{\flat}\otimes E^{\flat}-|E|^2g);\\
\Delta_gV& = &(|E|^2-\Lambda)V;\\
div_g E& =& 0;\ \  curl_g(V E) = 0,
\end{array}
\right.  
\end{equation}
where $E^{\flat}$ is the one-form metrically dual to $E$.

This motivates the following definition.
\begin{definition}
We say $(M, g, V, E)$ is an electrostatic system if $(M, g)$ is a Riemannian manifold, $V \in C^{\infty}(M)$ and is not identically zero, $E \in\mathfrak{X}(M)$,
and system \eqref{elst} is satisfied for some constant $\Lambda\in\mathbb{R}$. Moreover, the system is complete, if $(M, g)$ is complete.
\end{definition}

There are two important examples of electrostatic systems that we would like to report here. The first one is the Riemannian manifold 
$$\left([r_{+},r_{c}]\times\mathbb{S}^{2},g_{m,Q,\Lambda}=\Big(1-\frac{\Lambda}{3}r^2+\frac{Q^2}{r^2}-\frac{2m}{r}\Big)^{-1}dr^2+r^2g_{\mathbb{S}^2}\right),$$
where the potential function and electric field are given, respectively, by $V=\sqrt{1-\frac{\Lambda}{3}r^2+\frac{Q^2}{r^2}-\frac{2m}{r}}$ and $E=\frac{Q}{r^{2}}V(r)\partial_{r}.$ Here, $r_{+}<r_{c}$ are positive zeros of $V$ as described in Section~\ref{sectionInt} - Introduction. The next one is the standard cylinder over $\mathbb{S}^{2}$ with product metric, 
$$\left(\left[0,\frac{\pi}{\alpha}\right]\times\mathbb{S}^{2},ds^{2}+\rho^{2}g_{\mathbb{S}^{2}}\right),$$
where $\alpha=\sqrt{\frac{\Lambda\rho^{4}-Q^2}{\rho^{4}}}$  and $\rho\in(\frac{1}{\sqrt{2\Lambda}},\frac{1}{\sqrt{\Lambda}})$ is a positive double root of Eq. (\ref{defrn}). In this case the potential function and electric field are given by $V(s)=sin(\alpha s)$  and $E(s)=\frac{Q}{\rho^2}\partial_{s},$ respectively.

In the sequel, we consider $MAX(V)$ to be the set where the maximum of $V$ is achieved, namely, $$MAX(V)=\{p\in M;\,V(p)=V_{max}\}$$ and let $N$ be a single connected component of $M\setminus MAX(V).$ With these settings we will prove the following result.

\begin{theorem}\label{ACineq}
Let ($M^3, g, V,E)$ be a compact electrostatic system, such that $V^{-1}(0) = \partial M$. Assume $\sup_{M} |E|^2 \leq \Lambda$. Let $N$ be a single connected component of $M\setminus MAX(V)$, and let $\partial N=\partial M\cap N$ be a the non-empty and possibly disconnected. Write  $\partial N=\displaystyle{\cup_{i=1}^{k}\partial N_{i}}.$ Then, there is a boundary component of $\partial N$ which is diffeomorphic to a two-sphere and
$$\sum_{i=1}^{k}k_i\Big(\Lambda |\partial N_i|+\frac{48\pi^2Q(\partial N_{i})^2}{|\partial N_i|}\Big)\leq 6\pi \sum_{i=1}^{k}k_i\chi(\partial N_i),$$
where $k_i$ is the restriction of $|\nabla_gV|$ to $\partial N_i$. Moreover, the equality holds if and only if $E\equiv 0,$ and $(M^3, g)$ is isometric to the de Sitter solution.
\end{theorem}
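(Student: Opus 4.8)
The plan is to adapt the variational/integral-identity machinery of Cruz--Lima--de Sousa to the electrostatic Einstein--Maxwell system with $\sup_M|E|^2\le\Lambda$, exploiting the first equation in \eqref{elst} contracted with the level-set geometry of $V$. First I would set up the standard framework: on the region $N$, the function $V$ is a smooth proper function with $V^{-1}(0)=\partial M\cap N=\partial N$ and $V$ attaining its maximum on $MAX(V)\cap\partial N=\varnothing$ in the interior boundary of $N$, so that $k_i=|\nabla_g V|$ is a positive constant on each component $\partial N_i$ (this constancy follows from the static equations: on $V^{-1}(0)$ one gets $\nabla_g|\nabla_g V|^2$ tangent to the level set equals zero from $Hess_g V = V(\cdots)$ evaluated where $V=0$). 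Then I would integrate the trace-type identity obtained from the first line of \eqref{elst}: taking the trace gives $\Delta_g V = V(R_g-3\Lambda + 2|E|^2 - 3|E|^2)=V(R_g-3\Lambda-|E|^2)$, which combined with the second equation $\Delta_g V=(|E|^2-\Lambda)V$ forces the scalar curvature relation $R_g = 2\Lambda + 2|E|^2$ on $N$. This is the charged analogue of the static vacuum scalar-curvature identity and is the key structural input.

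Next I would run the Reilly-type / Hodge argument. Consider the vector field $\nabla_g V$ and apply the Bochner formula together with the static equation to control $\int_N |Hess_g V|^2$; alternatively, and more in line with \cite{brad}, use the level-set foliation $\{V=c\}$ of $N$ and integrate the mean-curvature evolution of the level sets from $\partial N$ (where $V=0$) toward $MAX(V)$. The crucial inequality is Gauss--Bonnet applied on each boundary component: on $\partial N_i$, the Gauss equation plus $R_g=2\Lambda+2|E|^2\ge 4\Lambda$ (using $\sup|E|^2\le\Lambda$, so actually $R_g = 2\Lambda+2|E|^2$ with $|E|^2\le\Lambda$) plus the fact that $\partial N_i$ is totally geodesic or has controlled second fundamental form (from the static equation restricted to $V=0$: $Hess_g V|_{TV^{-1}(0)} = -V|E|^2 g$ which vanishes on $V=0$, hence $A_{\partial N_i}$ is proportional to $k_i$ times the level-set second fundamental form, and in fact $\partial N_i$ is totally geodesic). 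Combining Gauss--Bonnet $\int_{\partial N_i}K = 2\pi\chi(\partial N_i)$ with the Gauss equation and the charge term $Q(\partial N_i)=\frac{1}{4\pi}\int_{\partial N_i}\langle E,\nu\rangle$ controlled by Hölder $\left(\int_{\partial N_i}\langle E,\nu\rangle\right)^2 \le |\partial N_i|\int_{\partial N_i}|E|^2$, and then summing against the weights $k_i$ and using the divergence theorem $\int_N \Delta_g V = \sum_i \int_{\partial N_i}\partial_\nu V = -\sum_i k_i|\partial N_i|$ together with $\Delta_g V=(|E|^2-\Lambda)V\le 0$, I would extract the weighted inequality $\sum_i k_i\bigl(\Lambda|\partial N_i| + \frac{48\pi^2 Q(\partial N_i)^2}{|\partial N_i|}\bigr)\le 6\pi\sum_i k_i\chi(\partial N_i)$. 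The factor of $3$ discrepancy between $16\pi^2$ and $48\pi^2$ and between $4\pi$-type constants and $6\pi$ is exactly where the trace contributions of the $3$-dimensional Hessian identity enter; I would track these constants carefully. The existence of a spherical boundary component then follows because $\sum_i k_i\chi(\partial N_i)>0$ forces $\chi(\partial N_i)>0$ for some $i$, hence $\partial N_i\cong\mathbb{S}^2$.

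For the rigidity/equality case I would trace back every inequality used: equality in Hölder forces $|E|^2$ constant on each $\partial N_i$ and $E$ parallel to $\nu$ there; equality in $R_g\ge 2\Lambda$ (i.e.\ in $|E|^2\le\Lambda$) combined with $\Delta_g V=(|E|^2-\Lambda)V$ forces $|E|^2\equiv\Lambda$ hence $\Delta_g V\equiv 0$; but then the maximum principle applied to the harmonic function $V$ on the region $N$ where $V>0$ with $V=0$ on $\partial N$ and $V$ bounded... actually more carefully, $\Delta_g V=0$ with $V\ge 0$ and $V=0$ on part of the boundary is incompatible with $V$ having an interior maximum unless $|E|^2<\Lambda$ somewhere — so I expect the equality analysis to force $E\equiv 0$ (contradicting $|E|^2\equiv\Lambda$ unless $\Lambda=0$, but $\Lambda>0$ here): resolving this tension is the subtle point. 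The correct reading is that the weighted inequality is an \emph{inequality} whose equality case, via the rigidity in the Reilly/Bochner step ($Hess_g V = \frac{\Delta_g V}{3}g$, i.e.\ $V$ is a concircular function), forces $(N,g)$ to be rotationally symmetric, and then chasing $|E|\equiv 0$ out of the Hölder equality plus the static equations, one identifies $(M,g)$ with the round hemisphere-type de Sitter solution $V=\cos(\sqrt{\Lambda}\, r)$ (up to normalization). I would assemble this by quoting the uniqueness of solutions to the resulting ODE, exactly as in the proof of Theorem~\ref{teoprincipal}.

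The main obstacle I anticipate is \textbf{correctly handling the boundary terms and the constancy of $k_i$}: one must verify from the static system \eqref{elst} that $|\nabla_g V|$ is indeed locally constant on each component of $V^{-1}(0)$ (this uses that on $V=0$ the equation $Hess_g V = V(\cdots)$ degenerates, so the tangential derivative of $|\nabla_g V|^2$ along the level set vanishes only after a second-order argument), and that the second fundamental form of $\partial N_i$ is what one expects. A secondary difficulty is keeping the cosmological-constant and charge coefficients consistent with the $\Lambda=1$ normalization used elsewhere in the paper versus the general $\Lambda$ kept here; I would state the inequality for general $\Lambda$ and remark that Corollary~\ref{inq1B} is the $\Lambda=1$ specialization applied to $N$ a single hemisphere-component with connected boundary $\Sigma_r$, so that $k_1$ cancels and one recovers $|\Sigma_r|+\frac{48\pi^2Q^2}{|\Sigma_r|}\le 12\pi$.
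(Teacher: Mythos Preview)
Your overall architecture (Gauss--Bonnet on each $\partial N_i$, H\"older to pass from $\int_{\partial N_i}|E|^2$ to the charge, weighting by the surface gravities $k_i$) matches the paper, and you correctly extract $R_g=2\Lambda+2|E|^2$ and the constancy of $|\nabla_g V|$ on each boundary component. However, there are two genuine gaps.

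\medskip
\textbf{The integral identity.} The heart of the paper's argument is not the crude divergence relation $\int_N\Delta_g V=-\sum_i k_i|\partial N_i|$ that you propose, but a Robinson--Shen type identity (the paper's Lemma~\ref{Shen}): setting $X=\frac{1}{V}\bigl(\nabla|\nabla_g V|^2-\tfrac{2}{3}\Delta_g V\,\nabla_g V\bigr)$ one has
\[
\mathrm{div}\,X=\frac{2}{V}\,|\mathring{Hess_g V}|^2+\frac{4}{3}\langle\nabla|E|^2,\nabla_g V\rangle.
\]
Integrating this over $\{\varepsilon<V<V_{max}-\varepsilon\}\cap N$ and using the static system to rewrite $\langle X,\nu\rangle$ on level sets as $2|\nabla_g V|\bigl(Ric(\nu,\nu)+\tfrac{2}{3}(|E|^2-\Lambda)\bigr)$ is what produces the key boundary inequality $\int_{\partial N}|\nabla_g V|\bigl(Ric(\nu,\nu)-\tfrac{2}{3}\Lambda\bigr)\,d\sigma\le 0$. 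Your sketch never arrives at a $Ric(\nu,\nu)$ term on $\partial N$, and without it the Gauss equation on $\partial N_i$ (namely $K_{\partial N_i}=\Lambda+|E|^2-Ric(\nu,\nu)$, using total geodesy) cannot be closed up to yield the stated inequality with the correct constants. The vague reference to ``Reilly-type/Bochner'' is on the right track, but the specific vector field $X$ and the $\frac{1}{V}$-weighting are essential.

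\medskip
\textbf{The inner boundary at $MAX(V)$.} You integrate over $N$ and account only for the flux through $\partial N=\partial M\cap N$, ignoring the contribution from the ``inner'' boundary near $MAX(V)$. This set need not be a smooth hypersurface, so one cannot simply say ``$\nabla V=0$ there, hence no flux''. The paper handles this by integrating over $\{\varepsilon<V<V_{max}-\varepsilon\}\cap N$ and invoking the \emph{Reverse \L ojasiewicz inequality} of Borghini--Chru\'sciel--Mazzieri to show $\liminf_{\varepsilon\to 0}\int_{\{V=V_{max}-\varepsilon\}\cap N}|\nabla_g V|\,d\sigma=0$. This step is invisible in your outline and is a real obstacle, not a formality.

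\medskip
\textbf{Equality case.} Your analysis here is tangled: you argue that equality forces $|E|^2\equiv\Lambda$ and then try to derive a contradiction. The paper's route is cleaner. In the chain of inequalities one discards the nonnegative bulk term $\frac{2}{3}\int_N(\Lambda-|E|^2)|E|^2 V\,dv$; equality forces this to vanish, hence $|E|\equiv 0$ on $N$ directly (since $\sup|E|^2\le\Lambda$ and $V>0$ in the interior). One then quotes the static-vacuum uniqueness theorem of Borghini--Mazzieri to identify the de~Sitter solution, rather than reproving rotational symmetry via an ODE argument.
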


Finally, in considering that $\partial N$ is connected we immediately obtain the following result which can be compared with Theorem 25 in \cite{brad}.

\begin{corollary}\label{inq1B}
Let ($M^3, g, V,E)$ be a compact electrostatic system, such that $V^{-1}(0) = \partial M$. Assume that $\sup_M |E|^2 \leq \Lambda$. Let $N$ be a single connected component of $M\setminus MAX(V)$ and suppose $\partial N=\partial M\cup N$ is connected. Then $\partial N$ is is diffeomorphic to a two-sphere and 
$$\Lambda|\partial N|+\frac{48\pi^2Q(\partial N)^2}{|\partial N|}\leq 12\pi$$
where $Q(\partial N)$ is the charge relative to $\partial N$. Moreover, the equality holds if, and only if, $E\equiv 0$ and $(M^3, g)$ is isometric to the de Sitter solution.
\end{corollary}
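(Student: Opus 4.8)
\textbf{Proof proposal for Corollary~\ref{inq1B}.}
The plan is to obtain this corollary as the special case of Theorem~\ref{ACineq} in which $\partial N$ is connected, so that the sum $\sum_{i=1}^{k}$ collapses to a single term. First I would invoke Theorem~\ref{ACineq} directly with $k=1$: its conclusion states that there is a boundary component of $\partial N$ diffeomorphic to $\mathbb{S}^2$, and since under our hypothesis $\partial N=\partial N_1$ is the only component, it must itself be a two-sphere. Consequently $\chi(\partial N)=\chi(\mathbb{S}^2)=2$, and the weighted inequality of Theorem~\ref{ACineq} becomes
\begin{equation*}
k_1\Big(\Lambda|\partial N|+\frac{48\pi^2Q(\partial N)^2}{|\partial N|}\Big)\leq 6\pi\, k_1\chi(\partial N)=12\pi\, k_1.
\end{equation*}

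The next step is to cancel the factor $k_1=|\nabla_g V|\big|_{\partial N}$. Here one must observe that $k_1$ is a positive constant on $\partial N$: since $V>0$ in the interior, $V^{-1}(0)=\partial M$, and $V$ attains its maximum away from $\partial N$ (on $MAX(V)$), the boundary $\partial N$ is a regular level set of $V$, so $|\nabla_g V|$ does not vanish there; moreover the electrostatic equations force $|\nabla_g V|$ to be locally constant on each connected component of $V^{-1}(0)$ (this is the standard Riemannian-static phenomenon, used already in the proof of Theorem~\ref{ACineq}). Dividing through by $k_1>0$ yields exactly
\begin{equation*}
\Lambda|\partial N|+\frac{48\pi^2Q(\partial N)^2}{|\partial N|}\leq 12\pi .
\end{equation*}

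For the rigidity statement I would simply trace the equality case of Theorem~\ref{ACineq} through this specialization: equality in the displayed inequality for $\partial N$ is equivalent to equality in the corresponding weighted inequality of Theorem~\ref{ACineq}, which by that theorem holds if and only if $E\equiv 0$ and $(M^3,g)$ is isometric to the de Sitter solution. Since Theorem~\ref{ACineq} is assumed available, no new analysis is needed beyond recording that the single-component hypothesis makes all the auxiliary quantities ($k_i$, $\chi(\partial N_i)$) trivial to handle.

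I expect the only genuinely delicate point to be the justification that $k_1$ is a nonzero constant that may legitimately be divided out — everything else is bookkeeping. If in the write-up of Theorem~\ref{ACineq} this positivity/constancy of $|\nabla_g V|$ on the horizon pieces has already been established (as it must be, since the weighted inequality there is stated with the $k_i$ as genuine positive weights), then the present corollary is immediate and the proof can be stated in a few lines.
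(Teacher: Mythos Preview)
Your proposal is correct and matches the paper's approach exactly: the paper states that Corollary~\ref{inq1B} follows ``immediately'' from Theorem~\ref{ACineq} once $\partial N$ is assumed connected, and your write-up simply spells out that specialization (single component, $\chi(\partial N)=2$, cancel the positive constant $k_1$). Your caveat about justifying $k_1>0$ is apt but, as you note, is already implicit in the statement of Theorem~\ref{ACineq}.
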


For our purposes we will provide a Robinson-Shen type identity that plays a crucial role in this section. It was essentially motivated by \cite{Rob} and \cite{Shen97}.

\begin{lemma}\label{Shen} (Robinson-Shen Type Identity)
Let $(M^{n},\,g, V, E)$ be a compact electrostatic system. 
Then we have:
\begin{eqnarray*}
{\rm div}\left[\frac{1}{V}\Big(\nabla|\nabla_gV|^{2}-\frac{2\Delta_g V}{n}\nabla_gV\Big)\right]=\frac{2}{V}|\mathring{Hess_g V}|^{2}+\frac{2(n-1)}{n}\langle\nabla |E|^2,\nabla_gV\rangle,
\end{eqnarray*} where $\mathring{Hess\,V}=Hess\,V-\frac{\Delta_g V}{n}g.$
\end{lemma}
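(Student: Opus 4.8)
The plan is to verify the identity by a direct Bochner-type computation: expand the divergence on the left term by term and feed in the three equations of \eqref{elst}. Begin with the second summand. Since the second equation of \eqref{elst} gives $\Delta_g V/V = |E|^2-\Lambda$, one has $\nabla(\Delta_g V/V)=\nabla|E|^2$ and $(\Delta_g V)^2/V=(|E|^2-\Lambda)^2V$, so
\[
{\rm div}\Big(\frac{2\Delta_g V}{n\,V}\,\nabla_g V\Big)=\frac{2}{n}\langle\nabla|E|^2,\nabla_g V\rangle+\frac{2}{n}(|E|^2-\Lambda)^2V .
\]
For the first summand, write ${\rm div}\big(\tfrac1V\nabla|\nabla_g V|^2\big)=\tfrac1V\Delta_g|\nabla_g V|^2-\tfrac1{V^2}\langle\nabla_g V,\nabla|\nabla_g V|^2\rangle$ and apply the Bochner formula
\[
\tfrac12\Delta_g|\nabla_g V|^2=|Hess_g V|^2+\langle\nabla_g V,\nabla\Delta_g V\rangle+Ric_g(\nabla_g V,\nabla_g V),
\]
inserting $\nabla\Delta_g V$ from the second equation and replacing $Ric_g(\nabla_g V,\nabla_g V)$ by the first equation of \eqref{elst} contracted twice against $\nabla_g V$, namely
\[
Ric_g(\nabla_g V,\nabla_g V)=\tfrac1V Hess_g V(\nabla_g V,\nabla_g V)+\Lambda|\nabla_g V|^2-2\langle E,\nabla_g V\rangle^2+|E|^2|\nabla_g V|^2 .
\]
Because $\langle\nabla_g V,\nabla|\nabla_g V|^2\rangle = 2\,Hess_g V(\nabla_g V,\nabla_g V)$, the two terms carrying a factor $V^{-2}$ cancel, and one is left with
\[
{\rm div}\Big(\frac1V\nabla|\nabla_g V|^2\Big)=\frac2V|Hess_g V|^2+2\langle\nabla|E|^2,\nabla_g V\rangle+\frac4V\big(|E|^2|\nabla_g V|^2-\langle E,\nabla_g V\rangle^2\big).
\]

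Subtracting the two computations and writing $|Hess_g V|^2=|\mathring{Hess_g V}|^2+\tfrac{(\Delta_g V)^2}{n}=|\mathring{Hess_g V}|^2+\tfrac1n(|E|^2-\Lambda)^2V^2$, every term matches the asserted right-hand side $\tfrac2V|\mathring{Hess_g V}|^2+\tfrac{2(n-1)}{n}\langle\nabla|E|^2,\nabla_g V\rangle$ except for a residual term $\tfrac4V\big(|E|^2|\nabla_g V|^2-\langle E,\nabla_g V\rangle^2\big)$. The remaining step, which I expect to be the main obstacle, is to show that this residual vanishes. By the equality case of the Cauchy--Schwarz inequality it is equivalent to $E$ being pointwise proportional to $\nabla_g V$; equivalently, using ${\rm curl}_g(VE)=0$, which forces $dE^\flat=-V^{-1}dV\wedge E^\flat$ and hence $V^{2}|dE^{\flat}|^{2}=|E|^2|\nabla_g V|^2-\langle E,\nabla_g V\rangle^2$, it is equivalent to $dE^\flat\equiv0$. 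So the heart of the argument is that in an electrostatic system $E$ and $\nabla_g V$ are everywhere parallel, equivalently that on each connected component the electric potential $\psi$ (locally defined by $VE^\flat=d\psi$) is a function of $V$. This is exactly where the full strength of \eqref{elst} is used, beyond the scalar equation $\Delta_g V=(|E|^2-\Lambda)V$ and the Hessian equation; I would either invoke the structure theory of electrostatic systems from \cite{brad} or supply a short auxiliary divergence/maximum-principle argument for it. Once the parallelism is available the residual disappears and the identity follows, the remaining Bochner bookkeeping being routine.
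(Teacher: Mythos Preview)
Your computation is correct and is essentially the same as the paper's: both expand the divergence, apply Bochner, feed in the Hessian and Laplacian equations of \eqref{elst}, and are left with exactly the residual $\tfrac{4}{V}\big(|E|^2|\nabla_gV|^2-\langle E,\nabla_gV\rangle^2\big)$. The only place you diverge is in how much weight you give to killing that residual. The paper does \emph{not} appeal to any structure theory from \cite{brad} or to a maximum-principle argument; it simply writes ``the assumption $curl_g(VE)=0$ implies that $E$ and $\nabla_gV$ are linearly dependent and consequently $\langle E,\nabla_gV\rangle^2=|E|^2|\nabla_gV|^2$'' and moves on. In other words, the paper treats the parallelism of $E$ and $\nabla_gV$ as an immediate pointwise consequence of the curl condition alone, not as something requiring the ``full strength'' of the system. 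So you have already carried out every step the paper does; you are just more hesitant than the authors about the one-line justification of the vanishing of the residual.
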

\begin{proof}
First of all, we will consider the field $X$ in the interior of $M^3$ given by $X=\frac{1}{V}\Big(\nabla|\nabla_gV|^{2}-\frac{2\Delta_g V}{n}\nabla_gV\Big).$ Whence, a straightforward computation yields
{\setlength\arraycolsep{2pt}
\begin{eqnarray*}
\frac{V}{2}{\rm div}\,X&=&\frac{1}{2}\Delta|\nabla_gV|^{2}+\frac{V}{2}\langle\nabla_gV^{-1},\nabla|\nabla_gV|^{2}\rangle-\frac{1}{n}(\Delta_g V)^{2}-\frac{V}{n}\langle\nabla\left(\frac{\Delta_g V}{V}\right),\nabla_gV\rangle\\
&=&\frac{1}{2}\Delta|\nabla_gV|^{2}-\frac{1}{V}Hess_g V(\nabla_gV,\nabla_gV)-\frac{1}{n}(\Delta_g V)^{2}\nonumber\\&&-\frac{1}{n}\langle\nabla\Delta_g V,\nabla_gV\rangle+\frac{\Delta_g V}{nV}|\nabla_gV|^{2},
\end{eqnarray*}} which can be rewritten, using the classical Bochner formula, as follows
{\setlength\arraycolsep{2pt}
\begin{eqnarray*}
\frac{V}{2}{\rm div}X&=&|\mathring{Hess_g V}|^{2}+Ric(\nabla_gV,\nabla_gV)-\frac{1}{V}Hess_g V(\nabla_gV,\nabla_gV)\\
&&+\frac{n-1}{n}\langle\nabla\Delta_g V,\nabla_gV\rangle+\frac{\Delta_g V}{nV}|\nabla_gV|^{2}.
\end{eqnarray*}}
Now note that the assumption $curl_g(VE)=0$ implies that $E$ and $\nabla_gV$ are linearly dependent and consequently  $\langle E,\nabla_gV\rangle^2=|E|^2|\nabla_gV|^2$. Thus, it suffices to use system \eqref{elst} to obtain
{\setlength\arraycolsep{2pt}
\begin{eqnarray*}
\frac{V}{2}{\rm div}X&=&|\mathring{Hess_g V}|^{2}+\frac{n-1}{n}\langle\nabla\Delta_g V,\nabla_gV\rangle-\frac{n-1}{n}\frac{\Delta_g V}{V}|\nabla_gV|^{2}\\
&=&|\mathring{Hessf}|^{2}+\frac{n-1}{n}V\langle\nabla |E|^2,\nabla_gV\rangle.
\end{eqnarray*}} This completes the proof of the lemma.
\end{proof}

\begin{proposition}
\label{prop1a}
Let ($M^3, g, V,E)$ be a compact electrostatic system, such that $V^{-1}(0) = \partial M$ and $\sup_M |E|^2 \leq \Lambda$. Let $N$ be a connected component of $M\setminus MAX(V).$ Then $N\cap \partial M\neq \emptyset.$
\end{proposition}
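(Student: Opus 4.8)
The plan is to argue by contradiction using the weak minimum principle for superharmonic functions applied to the second equation of the electrostatic system \eqref{elst}. Suppose some connected component $N$ of $M\setminus MAX(V)$ satisfies $N\cap\partial M=\emptyset$. First I would fix the topological picture. Since $V>0$ on $\mathrm{int}(M)$ while $V\equiv 0$ on $\partial M=V^{-1}(0)$, the maximum value $V_{max}$ is strictly positive, so $MAX(V)$ is a non-empty compact subset of $\mathrm{int}(M)$ and $M\setminus MAX(V)$ is a non-empty proper open subset of $M$. As $M$ is connected and $N$ is a non-empty connected component of this open set, $\partial N\neq\emptyset$; moreover, no point of $\partial N$ can lie in $M\setminus MAX(V)$, since it would then already belong to $N$ by maximality of the component. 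Hence $\partial N\subset MAX(V)$, so $V\equiv V_{max}$ on $\partial N$. Combined with $N\cap\partial M=\emptyset$, this gives $\overline N=N\cup\partial N\subset\mathrm{int}(M)$, so $V>0$ on the compact set $\overline N$.

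Next I would invoke the equation $\Delta_g V=(|E|^2-\Lambda)V$ from \eqref{elst}. On $N$ we have $V>0$ and, by the hypothesis $\sup_M|E|^2\le\Lambda$, the factor $|E|^2-\Lambda\le 0$; therefore $\Delta_g V\le 0$ on $N$, i.e. $V$ is superharmonic there. The weak minimum principle on the precompact domain $N$ then gives $\min_{\overline N}V=\min_{\partial N}V=V_{max}$. Since $V\le V_{max}$ everywhere by definition of the maximum, this forces $V\equiv V_{max}$ on $\overline N$, in particular on $N$. But $N\subset M\setminus MAX(V)$ means $V(p)\neq V_{max}$, hence $V(p)<V_{max}$, for every $p\in N$; as $N\neq\emptyset$ this is a contradiction. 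Therefore $N\cap\partial M\neq\emptyset$.

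The only steps that require genuine (if elementary) care are: the topological facts that $\partial N$ is non-empty and contained in $MAX(V)$ (here connectedness of $M$ and maximality of $N$ as a component are used), and the fact that $\overline N$ stays away from $\partial M$, so that the sign $\Delta_g V\le 0$ really holds on all of $N$. Once these are in place the minimum principle does the rest, and no regularity of the level set $MAX(V)$ is needed. Alternatively, one could integrate the Robinson--Shen identity of Lemma~\ref{Shen} over $N$: the field $X$ vanishes on $\partial N\subset MAX(V)$ because $\nabla_g V=0$ there, and an integration by parts gives $\int_N\langle\nabla|E|^2,\nabla_g V\rangle=-\int_N|E|^2(|E|^2-\Lambda)V\ge 0$, forcing $\mathring{Hess_g V}\equiv 0$ on $N$; but the minimum-principle route above is shorter and already yields the contradiction.
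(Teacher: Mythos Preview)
The paper states Proposition~\ref{prop1a} without proof, so there is no argument of the authors to compare against. Your proof via the minimum principle is correct: once you observe that (i) under the contradiction hypothesis $\overline{N}$ is a compact subset of $\mathrm{int}(M)$ with $\partial N\subset MAX(V)$, and (ii) $\Delta_g V=(|E|^2-\Lambda)V\le 0$ on $N$ since $V>0$ and $|E|^2\le\Lambda$, the weak minimum principle forces $V\equiv V_{max}$ on $N$, contradicting $N\subset M\setminus MAX(V)$. The topological point that $\partial N\subset MAX(V)$ is handled cleanly by the maximality of the component, and you are right that no regularity of $MAX(V)$ is needed. The alternative route you sketch via Lemma~\ref{Shen} would also work but is unnecessary here; your minimum-principle argument is the natural one and is essentially what the authors must have had in mind, given that the same superharmonicity is exploited implicitly in the proof of Theorem~\ref{ACineq}.
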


\subsection{Proof of Theorem~\ref{ACineq}}

\begin{proof} Firstly, we already known by Lemma \ref{Shen} that

\begin{equation}\label{eq4a}
{\rm div}\left[\frac{1}{V}\Big(\nabla|\nabla_gV|^{2}-\frac{2\Delta_g V}{n}\nabla_gV\Big)\right]=\frac{2}{V}|\mathring{Hess_g V}|^{2}+\frac{4}{3}\langle\nabla |E|^2,\nabla_gV\rangle,\end{equation} Next, taking into account that $M^n$ is compact and using the properties of the function $V,$ it follows that there exists a $\epsilon>0$ such that the set $\{V=t\}$ is regular for every $0\le t\le \epsilon$ and $V_{max}-\epsilon\le t<V_{max}.$ 

By one hand, write $X=\frac{1}{V}\Big(\nabla|\nabla_gV|^{2}-\frac{2\Delta_g V}{n}\nabla_gV\Big)$ upon integrating \eqref{eq4a} over $\{\varepsilon<V<V_{max}-\varepsilon\}\cap N$ we obtain
{\setlength\arraycolsep{2pt}
\begin{eqnarray}\nonumber
\int_{\{V=V_{max}-\varepsilon\}\cap N}\left\langle X,\,\nu\right\rangle d\sigma &\ge&\int_{\{V=\varepsilon\}\cap N}\left\langle X,\,\nu\right\rangle d\sigma-\frac{4}{3}\int_{\{\varepsilon<V<V_{max}-\varepsilon\}\cap N}|E|^2\Delta_g V dv\\\label{eq4b}
&&+\frac{4}{3}\int_{\{V=V_{max}-\varepsilon\}\cap N}|E|^2\left\langle \nabla_gV,\,\nu\right\rangle d\sigma-\frac{4}{3}\int_{\{V=\varepsilon\}\cap N}|E|^2\left\langle \nabla_gV,\,\nu\right\rangle d\sigma 
\end{eqnarray}}
where $\nu=\frac{\nabla_gV}{|\nabla_gV|}$ is the unit normal to $\{\varepsilon<V<V_{max}-\varepsilon\}\cap N$.

On the other hand, by using system \eqref{elst} as well as the fact that $\langle E,\nabla_gV\rangle^2=|E|^2|\nabla_gV|^2$ we obtain
\begin{equation*}
\Big\langle\frac{1}{V}\Big(\nabla|\nabla_gV|^{2}-\frac{2\Delta_g V}{n}\nabla_gV\Big),\frac{\nabla_gV}{|\nabla_gV|}\Big\rangle=2|\nabla_gV|Ric(\nu,\nu)+\frac{4}{3}(|E|^2-\Lambda)|\nabla_gV|.
\end{equation*} Plugging this data into \eqref{eq4b} we deduce
{\setlength\arraycolsep{2pt}
\begin{eqnarray}\nonumber
\int_{\{V=V_{max}-\varepsilon\}\cap N}|\nabla_gV|\Big(Ric(\nu,\,\nu)-\frac{2}{3}\Lambda\Big)d\sigma&\geq& \int_{\{V=\varepsilon\}\cap N}|\nabla_gV|\Big(Ric(\nu,\nu)-\frac{2}{3}\Lambda\Big)d\sigma\\\nonumber
&&+\frac{2}{3}\int_{\{\varepsilon<V<V_{max}-\varepsilon\}\cap N}(\Lambda-|E|^2)|E|^2V dv\\\label{eq4c}
&&\geq \int_{\{V=\varepsilon\}\cap N}|\nabla_gV|\Big(Ric(\nu,\nu)-\frac{2}{3}\Lambda\Big)d\sigma,
\end{eqnarray}}
where in the last inequality we use that $\sup_M|E|^2<\Lambda$.

Now, we claim that 
\begin{equation}
\label{eq4d}
\liminf_{\varepsilon \to 0}\int_{\{V=V_{max}-\varepsilon\}\cap N}|\nabla_gV|\Big(Ric(\nu, \nu)-\frac{2}{3}\Lambda\Big)d\sigma=0.
\end{equation} To prove this, since $Ric(\nu, \nu)-\frac{2}{3}\Lambda$ is bounded, it suffices to show that
$$\liminf_{t\to V_{max}}\int_{\{V=t\}\cap N}|\nabla_gV|d\sigma=0.$$ Indeed, by  Theorem 2.2 (Reverse Łojasiewicz Inequality) in \cite{BCM}, given a $\theta<1,$ there exists a neighbourhood  $\mathcal{U}$ of $MAX(V)$ and a real number $C>0$ such that
$$|\nabla_gV|^{2}(x)\leq C\left(V_{\max}-V(x)\right)^{\theta},$$ for every $x\in\mathcal{U}.$ 

Proceeding, for $t$ close to $V_{\max}$ we immediately have

$$\int_{\{V=t\}\cap N}|\nabla_gV|\leq C^{\frac{1}{2}}\left(V_{\max}-t\right)^{\frac{\theta}{2}}area\big(\{V=t\}\cap N\big).$$  From this, it follows that 
$$\liminf_{t\to V_{max}}\int_{\{V=t\}\cap E}|\nabla V|d\sigma=0,$$ as wished.

Now, taking $\liminf\limits_{\varepsilon \to 0}$ in (\ref{eq4c}) we arrive at

\begin{equation*}
\int_{\partial N}|\nabla_gV|\Big(Ric(\nu,\nu)-\frac{2}{3}\Lambda\Big)d\sigma\leq 0.
\end{equation*} At the same time, by  Gauss equation, Gauss-Bonnet Theorem, Cauchy-Schwarz inequality and H${\rm \ddot{o}}$lder’s inequality we have that
{\setlength\arraycolsep{2pt}
\begin{eqnarray*}
0&\ge & \int_{\partial N}|\nabla_gV|\Big(|E|^2+\frac{\Lambda}{3}-\frac{R^{\partial N}}{2}\Big)d\sigma\\
&\ge &\sum_{i=1}^kk_i\Big[\frac{1}{|\partial N_i|}\Big(\int_{\partial N_i}\langle E,N \rangle\Big)^2+|\partial N_i|\frac{\Lambda}{3}-2\pi\chi(\partial N_i)\Big]\\
&=& \sum_{i=1}^kk_i\Big(\frac{16\pi^2Q(\partial N_i)^2}{|\partial N_i|}+|\partial N_i|\frac{\Lambda}{3}\Big)-2\pi\sum_{i=1}^kk_i\chi(\partial N_i)
\end{eqnarray*}} 
as asserted.

Finally, if the equality holds it suffices to use \eqref{eq4c} jointly with the fact that $\sup_M|E|^2<\Lambda$ to conclude that $|E|\equiv 0$ on $N$. Therefore, we may apply  Theorem 2.4 in \cite{BM1} to conclude that $M^3$ is isometric to the standard hemisphere, which completes the proof of the theorem.
\end{proof}

\section*{Acknowledgements}
The third author would like to thank the Department of Mathematics of UFC for its support,
where part of the work was started. He would like to extend his special thank to Professor
C\'icero Tiarlos Cruz for very helpful conversations. The first author was partially supported by PPP/FAPEPI/MCT/CNPq, Brazil [Grant: 007/2018] and CNPq/Brazil [Grant: 422900/2021-4], while the second one is partially supported by CNPq/Brazil [Grant:313407/2020-7].

\bibliographystyle{amsplain}

\end{document}